\documentclass[12pt,a4paper]{amsart}
\usepackage[T1]{fontenc}
\usepackage[utf8]{inputenc}
\usepackage{amsmath}
\usepackage{amssymb}
\usepackage{amsthm}
\usepackage{amsfonts}
\usepackage{mathrsfs}
\usepackage{color}
\usepackage[colorlinks,linkcolor=blue,anchorcolor=green,citecolor=red]{hyperref}
\usepackage{cleveref}
\usepackage[margin=30mm]{geometry}
\usepackage{esint}

\makeatletter
\renewcommand\normalsize{%
  \@setfontsize\normalsize{13.2pt}{19pt}%
  \abovedisplayskip 10pt plus 2pt minus 4pt%
  \belowdisplayskip 10pt plus 2pt minus 4pt%
  \abovedisplayshortskip 6pt plus 2pt%
  \belowdisplayshortskip 6pt plus 2pt minus 2pt}
\renewcommand\small{\@setfontsize\small{11.4pt}{15pt}}
\renewcommand\footnotesize{\@setfontsize\footnotesize{9.5pt}{12pt}}
\makeatother
\normalsize

\numberwithin{equation}{section}

\newtheorem{theorem}{Theorem}[section]
\newtheorem{lemma}[theorem]{Lemma}
\newtheorem{proposition}[theorem]{Proposition}
\newtheorem{corollary}[theorem]{Corollary}
\theoremstyle{definition}
\newtheorem{definition}{Definition}[section]
\theoremstyle{remark}
\newtheorem{remark}[theorem]{Remark}
\newtheorem*{discussion}{Discussion of the hypotheses}
\newtheorem*{conjecture}{Landsberg--Berwald conjecture}

\title{The Rigidity of the Closed Three-Dimensional Regular Landsberg Metrics}
\author{Jianyu Mao}
\address{School of Mathematical Sciences, East China Normal University, Shanghai, China}
\email{1430658938@qq.com}
\author{Linfeng Zhou}
\address{School of Mathematical Sciences, East China Normal University, Shanghai, China}
\email{lfzhou@math.ecnu.edu.cn}
\thanks{This work was supported by the Fundamental and Interdisciplinary Disciplines Breakthrough Plan of the Ministry of Education of China (JYB2025XDXM112).}
\keywords{Finsler metric, Landsberg metric, Berwald metric, indicatrix, rigidity}
\date{}
\dedicatory{}

\begin{document}
\maketitle

\begin{abstract}
The Landsberg--Berwald conjecture asks whether every regular Landsberg metric is Berwald. We settle this conjecture for closed three-dimensional manifolds: every smooth strongly convex regular Landsberg metric on such a manifold is Berwald, without any reversibility assumption. Equivalently, there are no regular Landsberg ``unicorns'' on closed three-manifolds; this is a reformulation of the same conjecture, rather than a second independent conjecture. The proof combines a vanishing theorem for commuting Codazzi cubic tensors on closed surfaces, a rigidity theorem for three-dimensional Minkowski norms with constant-curvature indicatrices, and a rank-one argument for the nonlinear curvature. The remaining $R$-quadratic case is settled by compactness along the geodesic flow. The fibrewise constant-curvature theorem is an ingredient in this argument and does not assert the full Laugwitz conjecture.
\end{abstract}

\section{Introduction}

Let $M$ be a smooth manifold. A regular Finsler metric is a continuous function $F:TM\to[0,\infty)$ that is smooth on $T^\circ M=TM\setminus\{0\}$, positive away from the zero section, and positively homogeneous of degree one in each tangent space. We put

\[
E=\frac12F^2,\qquad g_{ij}(x,y)=\frac{\partial^2E}{\partial y^i\partial y^j}(x,y). \tag{1}
\]

The metric is strongly convex if $(g_{ij})$ is positive definite on $T^\circ M$. We make no assumption that $F(x,-y)=F(x,y)$.

A Finsler metric is \emph{Berwald} if the coefficients of its canonical connection depend only on the base point, or equivalently if its canonical parallel translations are linear. It is \emph{Landsberg} if parallel translation preserves the fundamental tensor infinitesimally. Every Berwald metric is Landsberg. The converse is the content of the Landsberg--Berwald conjecture:

\begin{conjecture}
Every regular Landsberg metric is Berwald.
\end{conjecture}

A hypothetical regular Landsberg metric that is not Berwald is often called a \emph{unicorn}. Accordingly, the Landsberg--Berwald conjecture and the so-called unicorn conjecture are two names for the same question, not two separate conjectures. The qualifier \emph{regular} is decisive: almost regular or $y$-local Landsberg structures may fail to be smooth or strongly convex in distinguished nonzero directions, and Asanov's constructions belong to this singular category \cite{ref1}. The conjecture concerns metrics that are smooth and strongly convex on the whole slit tangent bundle.

The problem goes back to Landsberg and Berwald \cite{ref3,ref6} and has accumulated many affirmative results under additional assumptions. A Landsberg metric with vanishing Douglas tensor is Berwald \cite{ref19}. Numata and Shibata obtained rigidity under scalar-curvature hypotheses \cite{ref26,ref30}, while Matsumoto and Shibata treated semi-$C$-reducible metrics \cite{ref24}. Ichijyō related Berwaldness to the linearity of fibre isometries \cite{ref22}. More recently, Shen proved the conjecture for regular $(\alpha,\beta)$-metrics \cite{ref10}, and subsequent work treated general $(\alpha,\beta)$-metrics \cite{ref28,ref29}, $(\alpha_1,\alpha_2)$-metrics \cite{ref27}, and spherically symmetric metrics \cite{ref25}.

Curvature, volume, and completeness assumptions provide another group of partial results. Shen introduced the $R$-quadratic condition \cite{ref9}, and Crampin proved that a complete $R$-quadratic Landsberg metric is Berwald under a boundedness hypothesis \cite{ref4}. Weakly Berwald Landsberg metrics and several conditions formulated through Berwald scalar curvature, invariant volumes, and related contractions were treated by Crampin and Li \cite{ref20,ref21,ref23}. Symmetry of the tangent Minkowski norms gives further rigidity results, including the theorem of Xu and Matveev \cite{ref14}. These theorems offer strong evidence for the conjecture, although their hypotheses are different and do not combine into a proof for arbitrary regular Landsberg metrics.

Bao proposed an approximation program aimed at understanding a regular Landsberg metric through nearby Berwald structures \cite{ref2}. A general proof based on averaging the fundamental tensor was announced by Szabó \cite{ref12} and later withdrawn in a correction \cite{ref13}; Matveev explained the underlying difficulty \cite{ref7}. If a parallel translation $\varphi$ is nonlinear, its differential $d_y\varphi$ varies with the integration point $y$, so the Landsberg isometry identity does not justify replacing it by a fixed linear map in a fibrewise average. In the present paper we avoid this obstruction by working intrinsically with the full Killing algebra of each indicatrix. For a detailed account of these developments and further references, see the survey article \emph{The current state of play in the Landsberg--Berwald problem of Finsler geometry} \cite{ref5}.

The following is the main result.

\begin{theorem}
Let $M$ be a closed smooth three-dimensional manifold, and let $F$ be a smooth strongly convex regular Finsler metric on $M$. If $F$ is Landsberg, then $F$ is Berwald.
\end{theorem}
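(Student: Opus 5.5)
The plan follows the three ingredients announced in the abstract, organised around the fibre geometry of the indicatrices. Since $\dim M=3$, each indicatrix $S_xM=\{y\in T_xM: F(x,y)=1\}$ is a closed strongly convex surface. It carries the Riemannian metric induced by $g_{ij}$ and the Cartan tensor $C_{ijk}=\tfrac12\partial_{y^k}g_{ij}$, which restricts to a symmetric cubic form on $S_xM$. The Landsberg condition $L_{ijk}=0$ says that the horizontal (Berwald) parallel transport along curves in $M$ acts by isometries between indicatrices. These transports preserve the induced metrics exactly, although a priori nonlinearly.

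\textbf{Step 1: an infinitesimal symmetry algebra on each fibre.} Differentiate the Landsberg identity and use the Ricci identities for the horizontal and vertical covariant derivatives of $C$. The aim is to show that the vertical parts of the curvature operator $R^i{}_{jk}$ (the nonlinear curvature of the horizontal distribution) restricted to $S_xM$ are Killing fields of the indicatrix metric. The same Ricci identities then give two things. First, the Berwald curvature $B^i{}_{jkl}$, which measures the failure of linearity, produces a family of cubic (Codazzi-type) tensors on $S_xM$. Second, these tensors commute with the Cartan form in the sense required by the paper's vanishing theorem. So at every $x$ we obtain a Lie algebra $\mathfrak k_x$ of Killing fields on the surface $S_xM$, and the curvature data takes values in $\mathfrak k_x$. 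There are then three cases.

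\textbf{Step 2: case analysis on $\dim\mathfrak k_x$.} If the relevant commuting Codazzi cubic tensors vanish, we invoke the vanishing theorem for commuting Codazzi cubic tensors on closed surfaces (genus zero, since $S_xM\cong S^2$). This forces the Berwald curvature to vanish at $x$. If instead $\mathfrak k_x$ is large, namely of dimension $3$, then $S_xM$ has constant curvature. The rigidity theorem for three-dimensional Minkowski norms with constant-curvature indicatrices then implies that $F_x$ is Euclidean, so $C=0$ at $x$ and again $B=0$ there. The remaining case is $\dim\mathfrak k_x\le 1$. There the curvature values lie in a line, and this is the rank-one situation. We show that the nonlinear curvature factors as $R=\omega\otimes X$ with $X$ a fixed rotational Killing field. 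Differentiating along the Bianchi identities then either kills $B$ or forces $R^i{}_{jk}$ to be quadratic in $y$, i.e.\ $F$ is $R$-quadratic on the open set where the earlier cases fail. Smoothness and a density argument patch the open pieces together, since the Berwald condition $B=0$ is closed.

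\textbf{Step 3: the $R$-quadratic case via compactness.} Here I would adapt Crampin's argument. Along a unit-speed geodesic $\gamma$, the Landsberg and $R$-quadratic conditions give an ODE for $B(\dot\gamma)$, roughly $\ddot B+\mathcal R\,B=0$ or a first-order linear transport with polynomial growth. Compactness of $M$, and hence of $SM$, makes all tensors bounded and the geodesic flow complete. This supplies Crampin's boundedness hypothesis \cite{ref4}, so $B$ grows at most linearly in time while remaining bounded, which forces $B\equiv0$. Alternatively one can integrate a quantity like $\|B\|^2$ over $SM$ against the Liouville measure, using flow invariance to get a vanishing derivative.

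\textbf{Main obstacle.} I expect the rank-one step to be the hardest. Showing that the infinitesimal symmetries genuinely generate Killing fields, rather than merely nonlinear isometries, is exactly the Szabó averaging gap. Turning a rank-one Killing algebra into either $B=0$ or the $R$-quadratic condition requires delicate use of the Bianchi identities, and the rotational Killing field has fixed points (poles on $S^2$) where the factorisation degenerates. At those points I would first try to extend by continuity using the fixed-point structure of an $SO(2)$-action on $S^2$.
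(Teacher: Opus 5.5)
Your outline has the paper's skeleton: curvature fields are Killing fields of the indicatrices, constant curvature forces Euclidean fibres, a rank-one case gives $R$-quadraticity, and Crampin's compactness argument finishes. But the rank-one step, which you flag as hardest, is not supplied, and it is the heart of the proof. Once the curvature fields span at most a line, $R^i{}_{jk}(y)=\omega_{jk}K^i(y)$ for a fixed two-form $\omega$ on $T_xM$. The vertical Bianchi identity $\partial_{y^\ell}R^i{}_{jk}+\partial_{y^j}R^i{}_{k\ell}+\partial_{y^k}R^i{}_{\ell j}=0$ holds for every spray, because $N^i_j=\partial_{y^j}G^i$. Here it reads $\omega\wedge dK^i=0$. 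In dimension three $\omega$ is decomposable, so $K$ is invariant under translation along the line $\ker\omega$. Smoothness of $K$ at the nonzero points of that line, together with one-homogeneity, then forces $K$ to be linear. Thus $R^i{}_{jk}$ is \emph{linear} in $y$; it is the Jacobi endomorphism that is quadratic. There is no ``either kills $B$'' alternative at this stage. The degeneracy that matters lies along $\ker\omega$ in $T_xM$, not at the poles of a rotational field on $S_xM$. The Killing property is also unrelated to Szab\'o's gap: $\mathcal L_{\delta_k}g=0$ on vertical vectors is exactly the Landsberg condition, so $\mathcal L_{[\delta_j,\delta_k]}g=[\mathcal L_{\delta_j},\mathcal L_{\delta_k}]g=0$.

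Your use of the Codazzi vanishing theorem is misplaced. Nothing shows that $B^i{}_{jkl}$ produces commuting Codazzi cubics, and Theorem 2.1 needs positive Gaussian curvature, which a general indicatrix lacks. The paper applies it only to the Cartan cubic of one norm whose indicatrix has constant curvature. Identity (19), evaluated at a maximum of $f$ and at a zero of the trace-free part $A$ (Lemma 3.1), forces $\kappa=1$. Then (14) and (16) make $C|_{T\Sigma}$ commuting.

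Your globalization fails as written. A pointwise case split followed by ``density and patching'' cannot feed Crampin's argument, which needs $R$-quadraticity along entire geodesics. Likewise, $C=0$ on one fibre does not give $B=0$ there, because $B$ involves base derivatives. The missing observation is one you already state. Landsberg translation is an isometry between indicatrices, so $\dim\operatorname{Kill}(\Sigma_x,h_x)$ is constant on each component of $M$. Hence either every fibre is Euclidean, so $F$ is Riemannian, or $F$ is globally $R$-quadratic. The dimension-two case you skip is absorbed by Lemma 4.1.

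Finally, your heuristic for Step 3 is wrong: $B$ does not grow. In the $R$-quadratic case the mixed Bianchi identity gives $DB=0$. The Landsberg condition gives $C_{ijk|\ell}=B_{ijk\ell}$, which is totally symmetric, and $D(C_{ijk,.\ell})=-B_{ijk\ell}$. So it is $t\mapsto C_{ijk,.\ell}v^iv^jv^kv^\ell$, for a parallel $g$-unit $v$, that is affine with slope $-B(v,v,v,v)$. Compactness of the unit-vector bundle bounds it. Citing Crampin's theorem covers this step, but the Liouville-measure alternative is not an argument.
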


\begin{discussion}
The word \emph{closed} means compact and without boundary. It is used only in the compactness argument of Theorem 6.1: compactness makes the geodesic flow complete on the unit tangent bundle and makes a certain affine function along each geodesic uniformly bounded. The dimension-three assumption enters earlier and in two different ways. First, each indicatrix is a surface, so its curvature is described by a single Gaussian curvature and its Killing algebra has the dichotomy used in Sections 3 and 4. Second, every nonzero two-form on a three-dimensional vector space is decomposable, which is the key algebraic input in Lemma 5.1. Strong convexity makes the fundamental tensor and every indicatrix metric Riemannian. Smooth regularity on all of $T^\circ M$ is needed when a homogeneous curvature field is extended across the origin of a quotient plane. Reversibility is nowhere used.
\end{discussion}

We now describe the proof. For each $x\in M$, let $\Sigma_x=\{y\in T_xM:F(x,y)=1\}$, and let $h_x$ be the metric induced by $g$. Landsberg parallel translation gives isometries between the surfaces $(\Sigma_x,h_x)$. Consequently, the dimension of their Killing field spaces is independent of $x$. The nonlinear curvature fields are Killing fields on each indicatrix.

If an indicatrix admits two independent Killing fields, its Gaussian curvature is constant. We prove that a smooth strongly convex Minkowski norm on a three-dimensional vector space with constant-curvature indicatrix is Euclidean. It follows that $F$ is Riemannian in this case.

If the Killing space has dimension at most one, the nonlinear curvature fields span a real vector space of dimension at most one. A vertical Bianchi identity then implies that the nonlinear curvature is linear in the fibre variable. Thus $F$ is $R$-quadratic. The compactness argument of Crampin \cite{ref4}, reproduced in Section 6 with our conventions, shows that a compact $R$-quadratic Landsberg metric is Berwald.

The base manifold is used only in the last compactness argument. The fibrewise dichotomy remains valid on any connected three-dimensional regular Landsberg manifold.

The paper is organized as follows. Section 2 proves a vanishing theorem for commuting Codazzi cubic tensors on closed positively curved surfaces. Section 3 develops the Hessian-cone description of three-dimensional Minkowski norms and establishes rigidity for constant-curvature indicatrices. Section 4 records the consequences of Landsberg parallel translation for indicatrix Killing fields. Section 5 shows that the one-dimensional Killing-field case forces the nonlinear curvature to be linear in the fibre variable. Finally, Section 6 proves the compact $R$-quadratic rigidity theorem and completes the proof of Theorem 1.1.

\textbf{Conventions and notation.} All manifolds, tensor fields, and curves are smooth unless another regularity is stated. A closed manifold is compact and has no boundary. We write $T^\circ M=TM\setminus\{0\}$ for the slit tangent bundle and use the summation convention. Latin indices refer to induced coordinates $(x^i,y^i)$ on $TM$. Fibre differentiation is denoted either by $\partial_{y^i}$ or by a comma followed by a dot index; horizontal Berwald differentiation is denoted by a vertical bar. The curvature signs are fixed by the displayed formulas (14), (16), and (25), so the proof does not depend on an unstated convention.

The adjective \emph{regular} always means that the Finsler function is smooth and strongly convex in every nonzero tangent direction. The word \emph{Euclidean} for a Minkowski norm means that $F^2$ is a positive-definite quadratic form; it does not mean that a preferred coordinate system has already been chosen. A Riemannian Finsler metric is automatically Berwald because its spray coefficients are quadratic in the fibre variable.

The logical dependence of the proof is short. Theorem 2.1 is a purely Riemannian statement about a cubic tensor on a surface. Proposition 3.2 applies it to the Cartan cubic of one tangent norm. Lemmas 4.2 and 4.3 convert the Landsberg condition into isometries and Killing fields on indicatrices. Proposition 5.3 turns the low-dimensional Killing algebra into $R$-quadraticity. Theorem 6.1 is the only global step and converts $R$-quadraticity into the Berwald property on a compact base. Keeping these layers separate also distinguishes the two different curvature tensors denoted by $R^g$ and $R$ later in the paper.

\textbf{AI usage.} This work grew out of the authors' study of earlier approaches to the Landsberg--Berwald problem, notably Crampin's $R$-quadratic method and the rigidity theory of Minkowski indicatrices. During the preparation of this work, the authors made substantial use of ChatGPT 6 (OpenAI) as a research assistant. In particular, it was used to verify the tensor computations underlying the main estimates, perform independent numerical checks of the key identities, and draft and polish portions of the manuscript. The research direction, the proof strategy, and all key mathematical decisions are the authors' own. The authors reviewed, verified, and edited all AI-assisted output in detail and take full responsibility for the integrity and correctness of this publication, including every mathematical statement and proof contained herein.

\section{A vanishing theorem for Codazzi cubics}

Codazzi equations first appeared as compatibility equations for the second fundamental form of a surface in Euclidean space. In modern differential geometry the same symmetry condition is imposed on tensors of any rank. Cubic tensors of Codazzi type occur naturally as the cubic form of an affine hypersurface, as the difference tensor of a statistical structure, and as the third derivative of a Hessian potential; see \cite{ref8,ref11,ref15,ref16,ref17}. The terminology used below is adapted to the particular rank-three tensor needed in this paper.

\begin{definition}
Let $(\Sigma,h)$ be a Riemannian surface and let $\nabla$ be its Levi-Civita connection. A \emph{symmetric cubic tensor} is a smooth section $C\in\Gamma(\operatorname{Sym}^3T^*\Sigma)$. Thus $C(X,Y,Z)$ is $C^\infty(\Sigma)$-linear in each vector-field argument and is unchanged by every permutation of $X,Y,Z$. For each vector field $X$, the metric dual of $C(X,\cdot,\cdot)$ is the self-adjoint endomorphism $C_X:T\Sigma\to T\Sigma$ defined by \[
h(C_XY,Z)=C(X,Y,Z). \tag{2}
\]
\end{definition}

\begin{definition}
The symmetric cubic tensor $C$ is called a \emph{Codazzi cubic} if \[
(\nabla_XC)(Y,Z,W)=(\nabla_YC)(X,Z,W)
\] for all vector fields $X,Y,Z,W$. Because $C$ is already symmetric in its last three arguments, this condition is equivalent to saying that the covariant four-tensor $\nabla C$ is totally symmetric. We say that $C$ is \emph{commuting} if $[C_X,C_Y]=C_XC_Y-C_YC_X=0$ for every $X,Y$.
\end{definition}

The definition is intrinsic: it does not depend on a coordinate system or an orthonormal frame. In local coordinates it reads $C_{ijk}=C_{(ijk)}$ and $\nabla_\ell C_{ijk}=\nabla_{(\ell}C_{ijk)}$. The commuting condition is pointwise algebraic. Since each $C_X$ is self-adjoint, a commuting family $\{C_X:X\in T_p\Sigma\}$ can be simultaneously diagonalized at every point $p$. On a surface this leaves two distinguished axes wherever $C$ is nonzero, and that elementary two-dimensional fact drives the proof below.

For context, a statistical structure consists of a Riemannian metric $h$ and a torsion-free affine connection $D$ for which $Dh$ is totally symmetric. If $K=D-\nabla$ is the difference from the Levi-Civita connection, then $h(K_XY,Z)$ is a symmetric cubic form. The extra condition that its Levi-Civita derivative be totally symmetric is often called conjugate symmetry. Cubic forms satisfying related Gauss and Codazzi equations also occur on locally strongly convex affine hypersurfaces. These viewpoints explain both the terminology and the commutator $[C_X,C_Y]$, which is the algebraic curvature term associated with the cubic form; see \cite{ref8,ref17,ref18}.

The theorem below isolates exactly the part needed for the Finsler application. It assumes neither trace-freeness nor that $C$ comes from an immersion or a statistical connection. Thus it is a statement about an abstract Riemannian surface carrying a symmetric cubic tensor with two explicit differential and algebraic properties.

\begin{theorem}
Let $(\Sigma,h)$ be a closed oriented smooth Riemannian surface with positive Gaussian curvature. Suppose that $C$ is a smooth symmetric cubic tensor such that $\nabla C$ is totally symmetric and \[
[C_X,C_Y]=0
\] for all vector fields $X$ and $Y$. Then $C$ vanishes identically.
\end{theorem}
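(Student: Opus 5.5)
The plan is to use the commuting hypothesis to put $C$ into a two-coefficient normal form in an adapted orthonormal frame, rewrite the Codazzi condition as first-order equations for those coefficients and the connection form, and then derive a contradiction with $K>0$ from a maximum principle. The Gauss--Bonnet theorem is available as a global tool. Suppose $C\not\equiv 0$.

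\textbf{Step 1 (algebraic normal form).} Fix a point $p$ with $C_p\neq0$. The endomorphisms $C_X$, $X\in T_p\Sigma$, are self-adjoint and commute, so they admit a common orthonormal eigenframe $e_1,e_2$. Then $C(X,e_1,e_2)=h(C_Xe_1,e_2)=0$ for every $X$, hence $C_{112}=C_{122}=0$, and only $a=C_{111}$ and $b=C_{222}$ survive. On the open set $U=\{C\neq0\}$ the pair of eigen-axes is determined except where the eigenvalues of all $C_X$ coincide, which forces $a=b=0$. Hence $U$ carries a smooth pair of orthogonal line fields. Passing to a double cover of $U$ if needed, I get smooth $e_1,e_2$, a connection form $\omega$ with $\nabla e_1=\omega\otimes e_2$ and $\nabla e_2=-\omega\otimes e_1$, and smooth functions $a,b$.

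\textbf{Step 2 (Codazzi equations in the frame).} Expanding $(\nabla_XC)(e_i,e_j,e_k)$ with $C_{112}=C_{122}=0$ and imposing total symmetry of $\nabla C$ should give exactly
\[
e_2(a)=\omega(e_1)\,a,\qquad e_1(b)=-\omega(e_2)\,b,\qquad \omega(e_2)\,a+\omega(e_1)\,b=0 .
\]
The remaining components of $\nabla C$ should be unconstrained; I would double-check this carefully. The Gaussian curvature is $K=e_1(\omega(e_2))-e_2(\omega(e_1))-\omega(e_1)^2-\omega(e_2)^2$, up to the sign fixed by the convention.

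\textbf{Step 3 (maximum principle).} The function $f=a^2+b^2=|C|^2$ is globally smooth and attains a positive maximum at some $p\in U$. Suppose first $a(p)b(p)\neq0$. Near $p$ the equations give $\omega(e_1)=e_2\log|a|$ and $\omega(e_2)=-e_1\log|b|$, and the third relation becomes $a\,e_1\log|b|=b\,e_2\log|a|$. Substituting into the expression for $K$ should produce a Weitzenböck-type identity
\[
\tfrac12\Delta f=\text{(nonnegative gradient terms)}+c\,K\,Q(a,b)
\]
with $c>0$ and $Q$ positive definite, or the analogous identity for a suitable weighted combination such as $\log(a^2+b^2)$. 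At an interior maximum $\Delta f\le 0$, so $K(p)\le 0$, contradicting $K>0$. If instead one coefficient, say $b$, vanishes at $p$, the same computation is run with $\omega(e_1)=e_2\log|a|$ alone. In that case the third equation forces $\omega(e_2)=0$ at points where $b=0$, and the Hessian of $\log|a|$ should again be tied to $-K$.

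\textbf{Main obstacle.} The hard step is Step 3: finding the right test function so that all first-order terms combine into a sign, uniformly across the regions $ab\neq0$, $b=0$ and $a=0$. If a pointwise identity does not close up, I would use a global route instead. By Gauss--Bonnet, $\Sigma$ is a sphere. A trace-free part of $C$ proportional to the cubic differential $C(\partial_z,\partial_z,\partial_z)$, combined with the trace form through the normal form of Step 1, should satisfy a $\bar\partial$-equation of Carleman type, so its zeros are isolated with positive multiplicity. Counting these zeros against the index of the orthogonal line-field pair from Step 1, which must total $2\chi(S^2)=4$ in the appropriate normalization, would then give the contradiction.
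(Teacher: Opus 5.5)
Your Steps~1 and~2 agree with the opening of the paper's proof. The normal form $C=a(\theta^1)^3+b(\theta^2)^3$ on $\{C\neq0\}$ and your three relations are exactly (3)--(4). Your curvature formula, however, is wrong beyond an overall sign: with $\nabla e_1=\omega\otimes e_2$ and $\omega_i=\omega(e_i)$ one has $\kappa=e_2(\omega_1)-e_1(\omega_2)-\omega_1^2-\omega_2^2$, as the polar frame of the flat plane confirms.

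The genuine gap is Step~3, and it is structural rather than computational. No pointwise maximum-principle argument for $|C|^2$, $\log|C|^2$, or any other function of $(a,b)$ can work. In (4), $a$ is differentiated only along $e_2$ and $b$ only along $e_1$. If you write $\omega=\omega^0+d\phi$, with $\phi$ the angle of the adapted frame against a fixed frame, the third relation becomes $Z\phi=-\omega^0(Z)$ with $Z=b\,e_1+a\,e_2$. This is a determined first-order system with three real characteristic directions, i.e.\ it is hyperbolic, not elliptic. At a critical point of $f=a^2+b^2$ one gets $\Delta f=2a\,e_1e_1a+2b\,e_2e_2b$ plus terms depending only on $a,b$, their first derivatives, and derivatives of $\phi$ up to order two. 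The second derivatives $e_1e_1a$ and $e_2e_2b$ appear in no derivative of the system. Where $ab\neq0$ they are unconstrained, so at the level of $2$-jets nothing prevents a nondegenerate local maximum of $f$ with $\kappa>0$; Cauchy--Kovalevskaya along a curve transverse to $e_1,e_2,Z$ produces such local solutions on the round sphere. Hence no identity $\tfrac12\Delta f=(\text{nonnegative})+c\,\kappa\,Q(a,b)$ can hold. Your fallback has the same defect. Codazzi gives $\nabla_{\bar z}C_{zzz}=\nabla_zC_{\bar zzz}$, a multiple of $\nabla_z\tau_z$, where $\tau=a\theta^1+b\theta^2$ is the trace. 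Commutation is only a pointwise algebraic relation between $\tau$ and the trace-free part, so it gives no bound $|\nabla\tau|\lesssim|A|$ of the kind a Carleman argument needs.

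What is missing is the global integral identity on which the paper's proof rests. Because $a^2+b^2>0$, the third relation in (4) forces $\omega_1=ta$, $\omega_2=-tb$ for a smooth $t$ on $\{C\neq0\}$. Then $Z=b\,e_1+a\,e_2$ satisfies $\operatorname{div}Z=0$, $Z(t)=\kappa$ and $|Z|=|C|$. Both $t$ and $Z$ change sign under a quarter-turn of the adapted frame. Hence $V=t(1+t^2)^{-1/2}Z$ is a well-defined field on $\{C\neq0\}$, with $|V|\le|C|$ and $\operatorname{div}V=\kappa(1+t^2)^{-3/2}>0$. No covering is needed; a double cover would not in general globalize your frame anyway, since adapted oriented frames are determined only up to quarter-turns. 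Take $\chi_\varepsilon=\chi(|C|/\varepsilon)$ with a cutoff $\chi$ vanishing on $[0,1]$ and equal to one on $[2,\infty)$. The divergence theorem gives $\int\chi_\varepsilon\,\kappa(1+t^2)^{-3/2}\,dA=-\int d\chi_\varepsilon(V)\,dA$. The right side is at most $2\|\chi'\|_\infty\|\nabla C\|_\infty$ times the area of $\{\varepsilon<|C|<2\varepsilon\}$, which tends to zero. So $\int_{\{C\neq0\}}\kappa(1+t^2)^{-3/2}\,dA=0$ and $C\equiv0$. The monotonicity $Z(t)=\kappa>0$ along the third characteristic is the real content. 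It yields a contradiction not through a second-order maximum principle but because $Z$ is divergence-free and $|V|\le|C|$ kills the flux through the zero set of $C$.
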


\begin{proof}
Set $\Omega=\{p\in\Sigma:C(p)\ne0\}$. At each point of $\Omega$, some $C_X$ is not a scalar multiple of the identity. Indeed, if $C_X=\lambda(X)\operatorname{Id}$ for every $X$, then symmetry gives \[
\lambda(X)h(Y,Z)=\lambda(Y)h(X,Z).
\] Taking nonzero orthogonal vectors $X,Y$ and putting $Z=Y$ gives $\lambda(X)=0$, and hence $C=0$. Since the endomorphisms $C_X$ are self-adjoint and commute, they can be simultaneously diagonalized. On a neighborhood in $\Omega$, choose an oriented orthonormal frame $(e_1,e_2)$ with dual coframe $(\theta^1,\theta^2)$ such that \[
C=a(\theta^1)^3+b(\theta^2)^3,\qquad a^2+b^2>0. \tag{3}
\] The unordered pair of axes is intrinsic, so two oriented adapted frames differ by a locally constant quarter-turn. Write \[
\nabla e_1=\omega e_2,\qquad \nabla e_2=-\omega e_1,
\] and set $\omega_i=\omega(e_i)$. With the convention $d\omega=-\kappa\,dA_h$, the Codazzi equations are \[
e_2a=a\omega_1,\qquad e_1b=-b\omega_2,\qquad a\omega_2+b\omega_1=0. \tag{4}
\] There is therefore a unique smooth function $t$ such that \[
\omega_1=ta,\qquad \omega_2=-tb. \tag{5}
\] Define $Z=be_1+ae_2$. A direct calculation using (4) and (5) gives \[
\operatorname{div}Z=0,\qquad Z(t)=\kappa,\qquad |Z|=|C|. \tag{6}
\] Under a quarter-turn of the adapted frame, both $t$ and $Z$ change sign. Hence \[
V=\frac{t}{\sqrt{1+t^2}}Z \tag{7}
\] is globally defined on $\Omega$. Moreover, \[
|V|\le |C|,\qquad \operatorname{div}V=\frac{\kappa}{(1+t^2)^{3/2}}. \tag{8}
\] Choose a smooth function $\chi:[0,\infty)\to[0,1]$ that is zero on $[0,1]$ and one on $[2,\infty)$, and put $\chi_\varepsilon=\chi(|C|/\varepsilon)$. Extending $\chi_\varepsilon V$ by zero gives a smooth vector field on $\Sigma$. The divergence theorem yields \[
\int_\Omega \chi_\varepsilon\frac{\kappa}{(1+t^2)^{3/2}}\,dA_h
=-\int_\Omega d\chi_\varepsilon(V)\,dA_h. \tag{9}
\] The integrand on the right is supported in $\{\varepsilon<|C|<2\varepsilon\}$. Since $|d|C||\le|\nabla C|$, (8) gives \[
|d\chi_\varepsilon(V)|\le 2\|\chi'\|_\infty\|\nabla C\|_\infty. \tag{10}
\] The sets $\{0<|C|<2\varepsilon\}$ decrease to the empty set, and their areas tend to zero. Letting $\varepsilon\downarrow0$ in (9), we obtain \[
\int_\Omega\frac{\kappa}{(1+t^2)^{3/2}}\,dA_h=0.
\] If $\Omega$ were nonempty, the integrand would be strictly positive on a nonempty open set. Thus $\Omega$ is empty.
\end{proof}

\begin{discussion}
Closedness enters at the global integration step and nowhere in the preceding local diagonalization. Because $\Sigma$ is compact and has no boundary, the extended vector field $\chi_\varepsilon V$ has zero total divergence. Compactness also gives $\|\nabla C\|_\infty<\infty$ and finite area, which justify the limiting estimate in (9)--(10). On a complete noncompact surface the same conclusion would require extra decay or integrability assumptions that eliminate the boundary term at infinity. On a compact surface with boundary, one would instead have to impose a boundary condition forcing the flux of $\chi_\varepsilon V$ to vanish.
\end{discussion}

\begin{discussion}
Theorem 2.1 is a surface theorem. Its proof uses the connection one-form of an oriented orthonormal frame, the scalar Gaussian curvature identity $d\omega=-\kappa dA_h$, and the fact that two adapted frames differ by a quarter-turn. These ingredients do not have a direct higher-dimensional replacement. We therefore make no claim here that the same statement holds in dimensions greater than two. Higher-dimensional vanishing theorems for Codazzi and statistical cubic forms are usually obtained from Bochner or Simons identities under additional trace, curvature, or boundedness hypotheses; see \cite{ref18}. The orientation assumption is inessential for the conclusion: for a nonorientable closed surface one may pass to its oriented double cover, apply the theorem there, and descend the vanishing of the lifted tensor.
\end{discussion}

\begin{remark}
The same argument applies when the curvature is everywhere negative. If $\kappa\ge0$, it shows that $C$ vanishes at every point where $\kappa>0$. No regularity assumption on the zero set of $C$ is required.
\end{remark}

\section{Hessian cones and constant-curvature indicatrices}

We begin with the affine notion of a Hessian metric. It is important to specify the affine structure: the expression ``a metric is a Hessian'' is invariant under affine coordinate changes, but not under arbitrary coordinate changes. Hessian geometry grew out of affine differential geometry and is now also standard in information geometry and convex analysis; systematic references are Shima \cite{ref16} and Nomizu--Sasaki \cite{ref17}.

\begin{definition}
An \emph{affine manifold} is a manifold $N$ equipped with a flat torsion-free connection $D$. A Riemannian metric $g$ on $(N,D)$ is a \emph{Hessian metric} if every point has a $D$-affine coordinate neighborhood on which there is a smooth strictly convex function $\Phi$ such that \[
g=Dd\Phi,\qquad g_{ij}=\frac{\partial^2\Phi}{\partial x^i\partial x^j}.
\] The triple $(N,D,g)$ is then called a Hessian manifold, and $\Phi$ is a local Hessian potential. Equivalently, $Dg$ is totally symmetric. The potential is determined only up to addition of an affine function.
\end{definition}

\begin{definition}
Let $W$ be a finite-dimensional real vector space. A \emph{smooth strongly convex Minkowski norm} is a function $F:W\to[0,\infty)$ that is smooth on $W\setminus\{0\}$, positive away from the origin, positively homogeneous of degree one, and whose energy $E=\tfrac12F^2$ has positive-definite Hessian. The resulting Hessian metric on $W\setminus\{0\}$ is \[
g_y(u,v)=D^2E_y(u,v).
\] It is called the fundamental tensor of $F$. The third derivative $C=\tfrac12D^3E$ is the Cartan cubic. The normalization by $\tfrac12$ gives $Dg=2C$.
\end{definition}

\begin{definition}
The \emph{indicatrix} of $F$ is the smooth strictly convex hypersurface \[
\Sigma=\{y\in W:F(y)=1\}.
\] For $y\in\Sigma$, Euler homogeneity gives $T_y\Sigma=\ker dF_y=\{u:g_y(y,u)=0\}$. The \emph{indicatrix metric} is the Riemannian metric \[
h_y=g_y|_{T_y\Sigma\times T_y\Sigma}.
\] Equivalently, the angular tensor $\mathbf h=g-dF\otimes dF$ on $W\setminus\{0\}$ restricts to $h$ on $T\Sigma$. This is the standard intrinsic metric of the Finsler unit sphere; see \cite{ref15}. It must not be confused with the Euclidean metric induced by an auxiliary linear inner product on $W$.
\end{definition}

From now on, let $F$ be such a norm on a three-dimensional vector space $W$. We use the standard flat connection $D$ of $W$ and the global potential $E=\tfrac12F^2$. Thus the punctured vector space is a Hessian manifold and the indicatrix is a two-dimensional Riemannian surface. Put

\[
E=\frac12F^2,\qquad g_{ij}=E_{ij},\qquad C_{ijk}=\frac12E_{ijk}. \tag{11}
\]

Strong convexity makes $(g_{ij})$ positive definite, so it is a genuine Riemannian metric on $W\setminus\{0\}$. The tensor $g$ is zero-homogeneous and $C$ is minus-one-homogeneous. The radial map $(0,\infty)\times\Sigma\to W\setminus\{0\}$, $(r,u)\mapsto ru$, is a diffeomorphism even when $F$ is not reversible.

Subscripts denote ordinary derivatives on $W\setminus\{0\}$. Euler's identities give

\[
E_i=g_{ij}y^j,\qquad g_{ij}y^iy^j=2E,\qquad C(y,\cdot,\cdot)=0. \tag{12}
\]

Indeed, Euler's identity $y^iE_i=2E$ for the two-homogeneous energy yields $E_i=g_{ij}y^j$ after one fibre derivative and $E_{ijk}y^k=0$ after two. On the indicatrix, $dF_y(u)=g_y(y,u)$ and $g_y(y,y)=1$. Hence the radial direction is a unit normal to $\Sigma$ for the Hessian metric, a fact used repeatedly below.

The Levi-Civita symbols of the Hessian metric are $\Gamma^\ell_{ij}=g^{\ell p}C_{pij}$. Consequently,

\[
(\nabla^g_\ell C)_{ijk}=\frac12E_{\ell ijk}-g^{pq}
\big(C_{\ell ip}C_{jkq}+C_{\ell jp}C_{ikq}+C_{\ell kp}C_{ijq}\big), \tag{13}
\]

and $\nabla^g C$ is totally symmetric. If $C_i=(g^{ap}C_{pib})_{a,b}$, differentiating $g^{-1}$ gives

\[
R^g(X,Y)=-[C_X,C_Y]. \tag{14}
\]

Thus the Cartan cubic restricted to the indicatrix is precisely a Codazzi cubic in the sense of Section 2. Formula (14) is the characteristic curvature identity of a Hessian metric: the flat affine connection contributes no curvature, and the Riemannian curvature is the commutator square of the cubic difference tensor. This is the point at which Hessian geometry connects the intrinsic geometry of $\Sigma$ to the algebraic commuting condition in Theorem 2.1.

Let $\Sigma=\{F=1\}$ and $h=g|_{T\Sigma}$. Writing $y=ru$, where $r=F(y)$ and $u\in\Sigma$, we have

\[
g=dr^2+r^2h. \tag{15}
\]

At $r=1$,

\[
R^g(X,Y)Z=R^h(X,Y)Z-h(Y,Z)X+h(X,Z)Y. \tag{16}
\]

The restriction of $C$ to $\Sigma$ is a Codazzi cubic.

Formula (15) explains the term \emph{Hessian cone}. The radial vector $\partial_r$ has unit length and is orthogonal to the level sets of $F$, while the metric on the level set $\{F=r\}$ is $r^2h$. Hence all fibrewise Riemannian information is encoded by the indicatrix metric. Formula (16) is the Gauss equation of the metric cone. In particular, $g$ is flat in tangential directions exactly when $h$ has constant sectional curvature one.

Let $\tau_i=h^{jk}C_{ijk}$. Differentiating a determinant in fixed linear coordinates gives

\[
\tau=2df,\qquad f=\frac14\log\det(g_{ij})\big|_\Sigma. \tag{17}
\]

Define the trace-free part of $C$ by

\[
A_{ijk}=C_{ijk}-\frac12\big(f_i h_{jk}+f_j h_{ik}+f_k h_{ij}\big). \tag{18}
\]

A calculation in an orthonormal frame, together with (14) and (16), gives the Gaussian curvature identity

\[
\kappa=1+\frac12\big(|A|^2-|df|^2\big). \tag{19}
\]

For completeness, if $a=A_{111}$, $b=A_{112}$, $p=f_1$, and $q=f_2$, trace-freeness gives $A_{122}=-a$ and $A_{222}=-b$, while

\[
C_{111}=a+\frac32p,\quad C_{112}=b+\frac12q,\quad
C_{122}=-a+\frac12p,\quad C_{222}=-b+\frac32q.
\]

Thus $|A|^2=4(a^2+b^2)$ and $[C_{e_1},C_{e_2}]_{12}=\frac12|df|^2-2(a^2+b^2)$, which proves (19).

\begin{lemma}
Every smooth trace-free symmetric cubic tensor on an oriented surface diffeomorphic to $S^2$ has a zero.
\end{lemma}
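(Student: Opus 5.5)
Fix any Riemannian metric $h$ on $\Sigma\cong S^2$ compatible with the orientation; trace-freeness is understood with respect to this $h$, as in (18). The plan is to identify trace-free symmetric cubic tensors with sections of a complex line bundle of nonzero degree, and then use the fact that such a bundle has no nowhere-vanishing section. Using the orientation and $h$, $T\Sigma$ becomes a complex line bundle $L$ with $\deg L=\chi(S^2)=2$. In an oriented orthonormal frame $(e_1,e_2)$ put $\zeta=\theta^1+i\theta^2$. With the notation $a=A_{111}$, $b=A_{112}$ from the computation after (19), trace-freeness gives $A_{122}=-a$ and $A_{222}=-b$. A direct expansion then shows
\[
A=\operatorname{Re}\big((a-ib)\,\zeta^3\big).
\]
Thus $A$ determines, and is determined by, the section $\sigma=(a-ib)\zeta^3$ of $(L^*)^{\otimes3}$, which is the bundle of cubic differentials. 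Equivalently, $A\mapsto \sigma$ is a real-linear isomorphism from trace-free symmetric cubics onto sections of $K^3$, with $K=L^*$.

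Next I would check that this identification is frame-independent. Under a rotation of the frame by angle $\phi$, $\zeta$ picks up the factor $e^{-i\phi}$, while the single complex component $a-ib$ rotates by $e^{3i\phi}$. The product $(a-ib)\zeta^3$ is therefore invariant, so $\sigma$ is a global smooth section of $K^3$. Moreover $|\sigma|^2=a^2+b^2=|A|^2/4$, so $\sigma$ and $A$ have exactly the same zeros.

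The conclusion then comes from a degree count. We have $\deg K^3=3\deg K=-3\chi(S^2)=-6\neq0$. A complex line bundle with a nowhere-vanishing smooth section is trivial and so has degree $0$. Hence $\sigma$, and therefore $A$, must vanish somewhere.

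The main obstacle is the frame-independence check, in particular confirming that $a-ib$ carries weight exactly $3$ and not $1$. Trace-freeness is what removes the weight-one part, namely the trace $\tau=2df$ in (17)–(18). If the bundle-degree language seems too heavy, I would instead argue by Poincaré–Hopf. Suppose $A$ never vanished. The directions in which the cubic $u\mapsto A(u,u,u)$ attains its maximum on the unit circle would then form a $3$-valued line field, that is, a field of directions defined up to rotation by $2\pi/3$. Lifting to the triple cover of the unit tangent bundle would produce a multivalued direction field of total index $\tfrac13$-integers summing to $\chi=2$ without singularities, which is a contradiction. The cleanest form of this index count is exactly the degree argument above.
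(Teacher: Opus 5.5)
Your argument is correct, but it takes a different route from the paper. The paper works directly with the maximum directions. Since $A(v,v,v)=a\cos 3\theta+b\sin 3\theta$ on each unit circle, a nowhere-vanishing $A$ singles out exactly three maximizing unit vectors at each point. These form a three-sheeted covering of $S^2$, simple connectivity makes that covering trivial, and any one sheet is a nowhere-zero vector field, contradicting the hairy-ball theorem. You instead package $A$ as the cubic differential $\sigma=(a-ib)\zeta^3\in\Gamma(K^3)$ and use $\deg K^3=-6\neq0$.

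Both proofs rest on the same weight-three computation: the paper's trigonometric formula is just $\mathrm{Re}\big((a-ib)\zeta(v)^3\big)$ for a unit vector $v$. Both also ultimately rest on $\chi(S^2)\neq0$.

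The paper's version is more elementary, needing only covering-space theory and the hairy-ball theorem. However, it uses $\pi_1(S^2)=1$ in an essential way. Your version needs the identification of the Euler class of $T\Sigma$ with $c_1$ of the complex line bundle $(T\Sigma,J)$. In exchange it proves more: since $\deg K^3=6g-6$, a trace-free cubic must vanish somewhere on every closed oriented surface except the torus. This sharpens the paper's remark that nonvanishing ones ``may exist'' in positive genus.

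Your Poincar\'e--Hopf fallback is stated loosely. Its clean form is precisely the paper's argument, where simple connectivity splits the three-valued direction field into three genuine unit vector fields, so no fractional index count is needed.
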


\begin{proof}
Suppose that $A$ is nowhere zero. In an oriented orthonormal frame, write $a=A_{111}$ and $b=A_{112}$. For $v=\cos\theta\,e_1+\sin\theta\,e_2$, trace-freeness gives \[
A(v,v,v)=a\cos(3\theta)+b\sin(3\theta).
\] This function has exactly three positive maximum directions on every unit circle. These directions form a three-sheeted covering of $S^2$. Since $S^2$ is simply connected, the covering has a global section, giving a nowhere-zero tangent vector field on $S^2$. This contradicts the hairy-ball theorem.
\end{proof}

\begin{discussion}
The trace-free hypothesis converts a cubic on each oriented tangent plane into a harmonic trigonometric polynomial of frequency three. The topology of $S^2$ then forces a zero. Both features are dimension-specific. On a surface of positive genus, nonvanishing trace-free cubic tensors may exist, and in higher dimensions the set of maximizing directions is not a three-sheeted covering. In Proposition 3.2 the hypothesis that $\Sigma$ is the indicatrix of a strongly convex norm guarantees that $\Sigma$ is diffeomorphic to $S^2$, so the lemma applies automatically.
\end{discussion}

\begin{proposition}
If the indicatrix metric of a smooth strongly convex Minkowski norm on a three-dimensional vector space has constant Gaussian curvature, then the norm is Euclidean. The curvature is equal to one.
\end{proposition}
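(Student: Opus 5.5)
The plan is to first pin down the value of the constant curvature, then show that $g$ is flat, and finally use Theorem 2.1 to kill the Cartan cubic. Since $\Sigma$ is a closed surface diffeomorphic to $S^2$, Gauss--Bonnet forces the constant $\kappa$ to be positive. To get $\kappa=1$ exactly, I would use Lemma 3.1: the trace-free part $A$ of $C|_\Sigma$ has a zero $p_0$. Then (19) gives $\kappa-1=\tfrac12(|A|^2-|df|^2)$, and at $p_0$ this is $-\tfrac12|df|^2\le0$, so $\kappa\le1$. For the reverse inequality I would integrate (19) over $\Sigma$. Here $f$ is a global smooth function, and I expect an integral identity controlling $\int|df|^2$ by $\int|A|^2$.

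\textbf{Main obstacle: the integral identity.} I would derive it from the Codazzi property. Taking the divergence of $C$ and using that $\nabla C$ is totally symmetric gives
\[
\operatorname{div}A=\nabla\tau\text{-terms}=\tfrac12\Delta f\cdot(\ldots),
\]
a Weitzenböck-type relation. Pairing this with $df$ and integrating by parts should yield something like $\int|df|^2\le\int|A|^2$, or a relation involving $\kappa$. An alternative route uses the level sets of $\det g$, and a further fallback is a maximum-principle argument at a maximum of $f$, where $df=0$ and hence $\kappa=1+\tfrac12|A|^2\ge1$.

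That last observation actually settles the matter cleanly. At a critical point of $f$ (one exists, e.g. a maximum, by compactness), (19) gives $\kappa\ge1$. At a zero of $A$, (19) gives $\kappa\le1$. Since $\kappa$ is constant, $\kappa=1$. This is the route I would commit to.

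With $\kappa=1$, the Gauss equation (16) shows that $R^g(X,Y)Z$ vanishes for $X,Y,Z$ tangent to $\Sigma$. Radial directions are flat as well, because (15) is a metric cone over a unit-curvature surface. Hence $R^g=0$ along $\Sigma$, and by homogeneity on all of $W\setminus\{0\}$. By (14), $[C_X,C_Y]=0$ for all $X,Y$. Since $C(y,\cdot,\cdot)=0$ by (12), each $C_X$ preserves $T\Sigma$, and the restricted endomorphisms are exactly the endomorphisms $C_X$ of the restricted cubic on $(\Sigma,h)$. So the restriction of $C$ to $\Sigma$ is a commuting Codazzi cubic on a closed positively curved surface. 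After orienting $\Sigma$, Theorem 2.1 gives $C|_\Sigma=0$.

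Together with $C(y,\cdot,\cdot)=0$, this gives $C=0$ on $\Sigma$, and by homogeneity $C=0$ on $W\setminus\{0\}$. Then $g_{ij}$ is constant, since $Dg=2C=0$ and $W\setminus\{0\}$ is connected in dimension three. By (12), $E=\tfrac12 g_{ij}y^iy^j$ is a positive-definite quadratic form, so the norm is Euclidean.

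The step needing the most care is checking that the tangential commutators of the restricted cubic coincide with the ambient ones. This holds because every $C_X$ kills $y$ and is $g$-self-adjoint, so it preserves $y^\perp=T\Sigma$.
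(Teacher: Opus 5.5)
Your committed argument is correct and is essentially the paper's proof: a maximum of $f$ and a zero of $A$ (from Lemma 3.1) squeeze the constant curvature to $\kappa=1$ via (19). Then (16) and (14) make the restricted Cartan cubic a commuting Codazzi cubic, Theorem 2.1 makes it vanish, and radial vanishing plus homogeneity make $g$ constant. The Gauss--Bonnet remark and the speculative integral identity are not needed; your check that each $C_X$ preserves $T\Sigma$ supplies a step the paper leaves implicit.
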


\begin{proof}
At a maximum point of $f$, equation (19) gives $\kappa\ge1$. By Lemma 3.1, $A$ vanishes at some point, where (19) gives $\kappa\le1$. Since $\kappa$ is constant, $\kappa=1$. Equation (16) shows that the tangential curvature of $g$ vanishes along $\Sigma$. By (14), $[C_X,C_Y]=0$ for tangent vectors $X,Y$. Theorem 2.1 now implies that $C|_{T\Sigma}=0$. Since $C$ vanishes whenever one argument is radial, the full Cartan cubic vanishes on $\Sigma$, and homogeneity extends this conclusion to $W\setminus\{0\}$. Hence $\partial_k g_{ij}=2C_{ijk}=0$. Thus $(g_{ij})$ is a constant positive definite matrix and \[
F(y)^2=g_{ij}y^iy^j.
\]
\end{proof}

\begin{discussion}
Smoothness and strong convexity are used to make $g$, $h$, and the Cartan cubic smooth Riemannian data on the whole punctured vector space. Positive homogeneity supplies the cone decomposition and the radial vanishing $C(y,\cdot,\cdot)=0$. No reversibility condition $F(-y)=F(y)$ is used. The three-dimensional assumption is essential to this proof: the indicatrix is then a surface, Lemma 3.1 applies, and its curvature is the scalar Gaussian curvature. The proposition is not asserted here in higher dimension. Its higher-dimensional analogue is related to the Laugwitz rigidity problem for Hessian metrics; additional symmetry cases are treated in \cite{ref14}.
\end{discussion}

\begin{corollary}
A smooth strongly convex Minkowski norm on a three-dimensional vector space with flat Hessian metric is Euclidean.
\end{corollary}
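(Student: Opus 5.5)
The plan is to reduce the corollary to Proposition 3.2 by showing that flatness of the Hessian metric $g$ on $W\setminus\{0\}$ forces the indicatrix metric $h$ to have constant Gaussian curvature one. The key tool is the Gauss equation of the metric cone, formula (16). The cone decomposition (15), $g=dr^2+r^2h$, together with (16) expresses the curvature of $g$ in directions tangent to $\Sigma$ through the curvature of $h$.

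Concretely, I will take an orthonormal pair $X,Y\in T_y\Sigma$ at a point $y\in\Sigma$. I will evaluate (16) with $Z=Y$ and pair the result with $X$ using $h$. This gives
\[
h(R^g(X,Y)Y,X)=\kappa(y)-1.
\]
If $g$ is flat, the left side vanishes, so $\kappa\equiv1$ on $\Sigma$. In particular the indicatrix metric has constant Gaussian curvature, and Proposition 3.2 applies directly: the norm is Euclidean.

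As an alternative that bypasses the curvature identity (19) and Lemma 3.1, I could argue straight from the second half of the proof of Proposition 3.2. Flatness of $g$ gives $R^g=0$, so (14) yields $[C_X,C_Y]=0$ for all tangent $X,Y$. By (16), $\kappa=1>0$. Theorem 2.1 then gives $C|_{T\Sigma}=0$. The radial vanishing (12) and homogeneity make $C\equiv0$, hence $g_{ij}$ is constant and $F^2=g_{ij}y^iy^j$.

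There is no real obstacle here. The only point to check is the sign convention in (16), namely that tangential flatness of $g$ corresponds to $\kappa=1$ rather than $\kappa=-1$ or $0$. This is fixed by the displayed formula, and it is consistent with the intuition that a flat metric cone $dr^2+r^2h$ has a round link.
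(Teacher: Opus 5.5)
Your main argument is the paper's own. With $R^g=0$, formula (16) gives $\kappa\equiv1$ on $\Sigma$, and Proposition 3.2 then shows the norm is Euclidean. Your alternative, which inlines the second half of the proof of Proposition 3.2 through (14) and Theorem 2.1, is also correct, and you are right that Lemma 3.1 and identity (19) are not needed here because $\kappa=1$ comes directly from (16).
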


\begin{discussion}
If $R^g=0$, formula (16) says that the indicatrix has Gaussian curvature one, so Proposition 3.2 applies. The conclusion means that $E$ is a positive-definite quadratic form in linear coordinates. Flatness here refers to the Levi-Civita curvature of the Hessian metric $g=D^2E$, not to the flat affine connection $D$, which is flat by definition.
\end{discussion}

\section{Landsberg transport and Killing fields}

Parallel translation in Finsler geometry is naturally nonlinear. Its infinitesimal data come from the canonical geodesic spray and the associated nonlinear, or Ehresmann, connection. We recall the definitions needed below; standard references are Bao--Chern--Shen \cite{ref15} and Crampin \cite{ref4,ref5}.

\begin{definition}
A \emph{spray} on $T^\circ M$ is a vector field whose integral curves project to solutions of a second-order differential equation and which is two-homogeneous in the fibre variable. The canonical spray of a regular Finsler metric is \[
S=y^i\frac{\partial}{\partial x^i}-2G^i(x,y)\frac{\partial}{\partial y^i},\qquad
G^i=\frac12g^{i\ell}\left(\frac{\partial^2E}{\partial x^k\partial y^\ell}y^k-\frac{\partial E}{\partial x^\ell}\right).
\] Its projected integral curves satisfy $\ddot x^i+2G^i(x,\dot x)=0$ and are the constant-speed Finsler geodesics.
\end{definition}

\begin{definition}
The canonical nonlinear connection is the splitting $T(T^\circ M)=\mathcal H\oplus\mathcal V$ whose horizontal space is spanned in induced coordinates by $\delta_j=\partial_{x^j}-N^a_j\partial_{y^a}$, where $N^i_j=\partial_{y^j}G^i$. The vertical space $\mathcal V$ is the kernel of $d\pi:T(TM)\to TM$. The coefficients $\Gamma^a_{ij}=\partial_{y^i}N^a_j$ define the associated Berwald covariant derivative on the pullback bundle $\pi^*TM\to T^\circ M$.
\end{definition}

\begin{definition}
Let $\gamma:[a,b]\to M$ be piecewise smooth and let $y_0\in T_{\gamma(a)}M\setminus\{0\}$. The \emph{nonlinear parallel lift} of $\gamma$ with initial value $y_0$ is the solution $y(t)\in T_{\gamma(t)}M$ of \[
\dot y^a+N^a_k(\gamma(t),y(t))\dot\gamma^k=0,qquad y(a)=y_0.
\] The endpoint map $P_\gamma(y_0)=y(b)$ is the \emph{nonlinear parallel translation} along $\gamma$. Some authors use the term \emph{parallel transport} for the same endpoint map. In this paper, ``lift'' denotes the evolving vector $y(t)$ and ``translation'' denotes $P_\gamma$. In general $P_\gamma$ is positively homogeneous but not linear.
\end{definition}

If $y_s(t)$ is a variation through horizontal lifts and $v(t)=\partial_s y_s(t)|_{s=0}$, then $v$ solves the linearized equation $\dot v^a+\Gamma^a_{ki}\dot\gamma^k v^i=0$. Thus $v(b)=d_{y_0}P_\gamma(v(a))$. Although the endpoint map is nonlinear, its differential is a linear map between the corresponding tangent spaces of the slit tangent fibres.

\begin{definition}
A Finsler metric is \emph{Landsberg} if its Landsberg tensor $L_{ijk}=C_{ijk|s}y^s$ vanishes, where the vertical bar denotes the Berwald horizontal derivative. Equivalently, nonlinear parallel translation preserves the fundamental tensor under its differential. It is \emph{Berwald} if the spray coefficients $G^i$ are quadratic polynomials in $y$, equivalently if the Berwald curvature $B^i{}_{jkl}$ vanishes. Every Berwald metric is Landsberg. The converse is the Landsberg--Berwald problem.
\end{definition}

In induced coordinates $(x^i,y^i)$, the objects just defined are summarized by

\[
S=y^i\frac{\partial}{\partial x^i}-2G^i\frac{\partial}{\partial y^i},\qquad
N^i_j=\frac{\partial G^i}{\partial y^j},\\
\delta_j=\frac{\partial}{\partial x^j}-N^a_j\frac{\partial}{\partial y^a},\qquad
\Gamma^a_{ij}=\frac{\partial N^a_j}{\partial y^i}. \tag{20}
\]

The canonical connection is conservative: $\delta_jE=0$. The Landsberg condition is equivalent to

\[
Q_{kij}:=\delta_kg_{ij}-\Gamma^a_{ki}g_{aj}-\Gamma^a_{kj}g_{ia}=0. \tag{21}
\]

The metric is Berwald if and only if

\[
B^i_{jkl}:=\frac{\partial^3G^i}{\partial y^j\partial y^k\partial y^l}=0. \tag{22}
\]

\begin{definition}
A vector field $X$ on a Riemannian manifold $(N,h)$ is a \emph{Killing field} if $\mathcal L_Xh=0$. Its local flow consists of local isometries. The vector space of global Killing fields, equipped with the usual Lie bracket, is denoted $\operatorname{Kill}(N,h)$.
\end{definition}

\begin{lemma}
If a connected Riemannian surface admits two linearly independent global Killing fields, then its Gaussian curvature is constant.
\end{lemma}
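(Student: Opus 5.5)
The proof rests on two standard facts. First, isometries preserve the Gaussian curvature $\kappa$, so every Killing field $Z$ satisfies $Z(\kappa)=0$. Second, on a connected Riemannian manifold a Killing field is determined by its $1$-jet $(Z(p),\nabla Z(p))$ at any single point, because $\nabla\nabla Z$ is expressed linearly through $Z$ and the curvature. In particular, a Killing field that vanishes on a nonempty open set vanishes identically. Let $X,Y$ be the two linearly independent global Killing fields. The plan is to show that they are pointwise independent on a dense open set. Then $d\kappa=0$ there, hence everywhere by continuity, and connectedness gives the constancy of $\kappa$.

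\textbf{Step 1: $d\kappa$ vanishes where $X$ and $Y$ are pointwise independent.} Let $U=\{p: X(p)\wedge Y(p)\neq0\}$, which is open. Since $X(\kappa)=Y(\kappa)=0$ and $X,Y$ span $T_p\Sigma$ at every $p\in U$, we get $d\kappa=0$ on $U$.

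\textbf{Step 2: $U$ is dense.} Suppose instead that some nonempty open set $V$ is disjoint from $U$.
\begin{itemize}
\item $X$ cannot vanish on an open subset of $V$; otherwise $X\equiv0$ by the $1$-jet fact. So, after shrinking $V$, we may assume $X\neq0$ on $V$.
\item Since $X\wedge Y=0$ on $V$, we can write $Y=\phi X$ with $\phi$ smooth on $V$.
\item Using $\mathcal L_Xh=0$, the Killing equation for $Y$ becomes
\[
0=\mathcal L_{\phi X}h=\phi\,\mathcal L_Xh+d\phi\otimes X^\flat+X^\flat\otimes d\phi=d\phi\otimes X^\flat+X^\flat\otimes d\phi .
\]
\item The symmetric product of two covectors vanishes only if one of them is zero. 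Since $X^\flat\neq0$ on $V$, this forces $d\phi=0$, so $\phi\equiv c$ is constant on a connected component of $V$.
\item Then $Y-cX$ is a Killing field vanishing on an open set, so $Y=cX$ globally. This contradicts the linear independence of $X$ and $Y$.
\end{itemize}
Hence the complement of $U$ has empty interior, i.e.\ $U$ is dense.

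\textbf{Step 3: conclusion.} By Steps 1 and 2, the continuous one-form $d\kappa$ vanishes on a dense set, so $d\kappa\equiv0$ on $\Sigma$. Since $\Sigma$ is connected, $\kappa$ is constant.

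The main point requiring care is Step 2. The hypothesis gives linear independence only in $\operatorname{Kill}(\Sigma,h)$, not pointwise, and one must rule out Killing fields that are everywhere parallel without being proportional. The computation of $\mathcal L_{\phi X}h$ combined with the unique-continuation property of Killing fields handles this.
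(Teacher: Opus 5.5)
Your proof is correct and follows essentially the same route as the paper. The paper argues by contradiction from an open set where $d\kappa\neq0$, using the one-dimensional kernel of $d\kappa$ to force $Y=uX$ there, whereas you phrase it as density of the set where $X$ and $Y$ are pointwise independent; the key identity $\mathcal L_{uX}h=du\otimes X^\flat+X^\flat\otimes du$ and the unique-continuation step are the same in both.
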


\begin{proof}
Every Killing field preserves the Gaussian curvature $\kappa$. Suppose that $d\kappa\ne0$ on an open set $U$. All Killing fields on $U$ are tangent to the one-dimensional kernel of $d\kappa$. Let $X,Y$ be independent global Killing fields. After shrinking $U$, assume that $X$ is nonzero and write $Y=uX$. The Killing equations imply \[
du\otimes X^\flat+X^\flat\otimes du=0.
\] Evaluation on $(X,X)$ and on $(X,W)$, where $W\perp X$, gives $X(u)=W(u)=0$. Hence $u$ is locally constant. The Killing field $Y-uX$ vanishes on an open set and therefore vanishes globally, a contradiction. Thus $d\kappa=0$.
\end{proof}

\begin{discussion}
Connectedness is used to turn local constancy of the Gaussian curvature into global constancy and to use unique continuation for Killing fields. The two Killing fields must be linearly independent as elements of the global Killing algebra; they need not be pointwise independent everywhere. The conclusion is special to surfaces because Gaussian curvature is a scalar invariant. In higher dimensions, two Killing fields do not force all sectional curvatures to be constant.
\end{discussion}

\begin{lemma}
For a Landsberg metric, nonlinear parallel translation along every piecewise smooth curve is an isometry between the indicatrix metrics at its endpoints. Consequently, $\dim\operatorname{Kill}(\Sigma_x,h_x)$ is constant on each connected component of the base.
\end{lemma}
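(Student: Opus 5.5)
Fix a piecewise smooth curve $\gamma:[a,b]\to M$ and work with the nonlinear parallel translation $P_t=P_{\gamma|[a,t]}:T_{\gamma(a)}M\setminus\{0\}\to T_{\gamma(t)}M\setminus\{0\}$. Since the lift equation is a smooth ODE with coefficients that are one-homogeneous in $y$, each $P_t$ is a diffeomorphism onto its image; running the reversed curve gives the inverse, so $P_t$ is a diffeomorphism of slit fibres. Positive homogeneity of $N^a_k$ in $y$ makes $P_t$ positively homogeneous. Working on each smooth piece and composing then handles piecewise smooth curves.

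The first step is to show that $P_t$ preserves $F$. Along a horizontal lift $y(t)$, $\frac{d}{dt}E(\gamma(t),y(t))=\dot\gamma^k(\delta_kE)(\gamma,y)$, and the canonical connection is conservative, $\delta_kE=0$. Hence $F(P_t y_0)=F(y_0)$, so $P_t$ maps $\Sigma_{\gamma(a)}$ diffeomorphically onto $\Sigma_{\gamma(t)}$. Its differential therefore maps $T_{y_0}\Sigma$ into $T_{P_ty_0}\Sigma$.

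The main step is to show that $d_{y_0}P_t$ preserves $g$. Take $u,w\in T_{y_0}(T_{\gamma(a)}M)$ and propagate them by the linearized equation $\dot v^a+\Gamma^a_{ki}\dot\gamma^kv^i=0$. As noted after the definition of the lift, this gives $v(t)=d_{y_0}P_t(v(a))$. Now differentiate $\phi(t)=g_{ij}(\gamma(t),y(t))u^i(t)w^j(t)$. Writing $\frac{d}{dt}g_{ij}(\gamma,y)=\dot\gamma^k(\partial_{x^k}g_{ij}-N^a_k\partial_{y^a}g_{ij})=\dot\gamma^k\delta_kg_{ij}$ and substituting the linearized equations for $\dot u,\dot w$, we get
\[
\dot\phi=\dot\gamma^k\big(\delta_kg_{ij}-\Gamma^a_{ki}g_{aj}-\Gamma^a_{kj}g_{ia}\big)u^iw^j=\dot\gamma^kQ_{kij}u^iw^j.
\]
This vanishes by (21). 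So $d_{y_0}P_t$ is a $g$-isometry between the tangent spaces of the slit fibres. Restricting to $T_{y_0}\Sigma$ shows that $P_\gamma|_{\Sigma}$ is an isometry $(\Sigma_{\gamma(a)},h_{\gamma(a)})\to(\Sigma_{\gamma(b)},h_{\gamma(b)})$. The only delicate point is the bookkeeping of indices in this derivative and matching it with the sign convention in (21). I expect that to be the main obstacle, although the identity is essentially the definition of $Q$.

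For the consequence, take $x,x'$ in one connected component and join them by a piecewise smooth curve. The isometry $\phi=P_\gamma|_{\Sigma_x}$ pushes Killing fields forward, $X\mapsto\phi_*X$, and this gives a linear isomorphism $\operatorname{Kill}(\Sigma_x,h_x)\cong\operatorname{Kill}(\Sigma_{x'},h_{x'})$. Its inverse comes from the reversed curve. These spaces are finite-dimensional, since the indicatrices are compact surfaces, so the dimension is constant on the component.
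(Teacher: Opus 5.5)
Your proposal is correct and follows essentially the same route as the paper. Conservativity $\delta_kE=0$ gives preservation of $F$; the linearized lift equation combined with (21) gives $\frac{d}{dt}\big(g_{ij}v^iw^j\big)=Q_{kij}\dot x^kv^iw^j=0$; and pushing Killing fields forward along the invertible translation, with inverse given by the reversed curve, gives constancy of $\dim\operatorname{Kill}(\Sigma_x,h_x)$ on each connected component.
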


\begin{proof}
Along a curve $x(t)$, its horizontal lift satisfies \[
\dot y^a=-N^a_k(x(t),y(t))\dot x^k. \tag{23}
\] Since $\delta_kE=0$, the lift preserves $F$. Reversing the curve gives the inverse parallel translation. A variation vector $v$ satisfies $\dot v^a=-\Gamma^a_{ki}\dot x^k v^i$. For two such vectors $v,w$, equation (21) gives \[
\frac{d}{dt}\big(g_{ij}(x(t),y(t))v^iw^j\big)=Q_{kij}\dot x^k v^iw^j=0. \tag{24}
\] Thus parallel translation restricts to an isometry of indicatrices. Pushing forward Killing fields proves the last assertion.
\end{proof}

\begin{discussion}
No compactness, orientability, or reversibility assumption is used. Piecewise smooth curves are sufficient because the lift equation may be solved on each smooth subinterval and the endpoint maps composed. The statement concerns the intrinsic indicatrix metrics: $dP_\gamma$ is an isometry between tangent spaces of the indicatrices, even though $P_\gamma$ need not be the restriction of a linear map between the ambient tangent spaces. This distinction is precisely why fibrewise averaging cannot by itself solve the Landsberg--Berwald problem.
\end{discussion}

\begin{definition}
The curvature of the nonlinear connection is the vertical-valued two-form measuring the failure of horizontal vector fields to close under brackets. In coordinates it is defined by
\end{definition}

\[
[\delta_j,\delta_k]=R^a{}_{jk}\frac{\partial}{\partial y^a},\qquad
R^a{}_{jk}=\delta_kN^a_j-\delta_jN^a_k. \tag{25}
\]

For $x\in M$ and $u,v\in T_xM$, the corresponding vertical vector field $y\mapsto R_x(u,v)(y)$ on $T_x^\circ M$ is called a \emph{curvature field}. Since the connection is homogeneous, this field is one-homogeneous in $y$. Curvature fields are infinitesimal holonomy fields: they record the first nontrivial change produced by parallel translation around a small loop.

\begin{lemma}
For a Landsberg metric, every curvature field $R_x(u,v)$ is tangent to $\Sigma_x$ and is a Killing field of $(\Sigma_x,h_x)$.
\end{lemma}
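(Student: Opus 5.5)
The plan is to realize each curvature field as the derivative of a one-parameter family of parallel translations around small loops, and then apply Lemma 4.3. Fix $x\in M$ and $u,v\in T_xM$, and choose coordinates near $x$. For small $s$, let $\gamma_s$ be the closed piecewise smooth loop that goes around the coordinate parallelogram with sides $\sqrt{s}\,u$ and $\sqrt{s}\,v$ (for $s\ge 0$; for $s<0$ traverse it in the opposite order). By Lemma 4.3, each holonomy map $P_{\gamma_s}:T_x^\circ M\to T_x^\circ M$ preserves $F$ and restricts to an isometry $\phi_s$ of $(\Sigma_x,h_x)$, with $\phi_0=\mathrm{id}$.

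The standard holonomy expansion, obtained by composing the flows of the horizontal lifts $\delta_u=u^j\delta_j$ and $\delta_v=v^k\delta_k$, gives
\[
P_{\gamma_s}(y)=y\pm s\,R^a{}_{jk}(x,y)u^jv^k\partial_{y^a}+o(s),
\]
with the sign fixed by (25). This uses the usual commutator-of-flows formula $\Phi^{\delta_v}_{-t}\Phi^{\delta_u}_{-t}\Phi^{\delta_v}_t\Phi^{\delta_u}_t=\mathrm{id}+t^2[\delta_u,\delta_v]+o(t^2)$, where $[\delta_u,\delta_v]$ at points of the fibre equals $R^a{}_{jk}u^jv^k\partial_{y^a}$. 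The $\partial_x$-components cancel because the loop closes in the base; to ensure this one should extend $u,v$ to commuting coordinate fields. Consequently $s\mapsto\phi_s$ is $C^1$ at $s=0$ in the $C^\infty$ topology on compact subsets of $\Sigma_x$, and its derivative is $\pm R_x(u,v)$.

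Tangency then follows directly. Since $F\circ P_{\gamma_s}=F$, differentiating at $s=0$ gives $R_x(u,v)F=0$. A shorter route is $R^a{}_{jk}\partial_{y^a}E=[\delta_j,\delta_k]E=0$, using $\delta_jE=0$. In either form this says that $R_x(u,v)$ is tangent to each level set of $F$, in particular to $\Sigma_x$.

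For the Killing property, I would differentiate the identity $\phi_s^*h_x=h_x$ at $s=0$. Because the $\phi_s$ are smooth in $s$ up to first order with derivative $X=\pm R_x(u,v)|_{\Sigma_x}$, this gives $\mathcal L_Xh_x=0$. A cleaner alternative avoids loops altogether and works infinitesimally. By (21), $\mathcal L_{\delta_j}$ of the vertical metric $g_{ij}\,dy^i\otimes dy^j$ vanishes on vertical vectors, which is the infinitesimal form of (24). Taking the commutator, $\mathcal L_{[\delta_j,\delta_k]}=[\mathcal L_{\delta_j},\mathcal L_{\delta_k}]$ also annihilates the vertical metric restricted to vertical vectors. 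Since $[\delta_j,\delta_k]$ is vertical and tangent to each fibre, this says exactly that the fibre Lie derivative of $g_x$ along $R_x(e_j,e_k)$ vanishes. Restricting to $\Sigma_x$, to which the field is tangent, shows that it is Killing for $h_x$.

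The main obstacle is making the infinitesimal argument rigorous. One must check that $\mathcal L_{\delta_j}$ preserves the vertical subbundle, so that the phrase ``vanishes on vertical vectors'' is stable under commutators. This holds because $[\delta_j,\partial_{y^a}]=\Gamma^b_{ja}\partial_{y^b}$ is vertical. One must also verify that the identity $(\mathcal L_{\delta_j}\mathbf g)(\partial_{y^a},\partial_{y^b})=Q_{jab}$ holds with the conventions (20), where $\mathbf g=g_{ij}dy^i\otimes dy^j$. I would do this coordinate check first and then run the commutator argument on vertical arguments only.
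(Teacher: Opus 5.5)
Your proposal is correct. The infinitesimal argument you end up committing to is exactly the paper's proof: tangency from $[\delta_j,\delta_k]E=0$, and the Killing property from $\mathcal L_{[\delta_j,\delta_k]}=[\mathcal L_{\delta_j},\mathcal L_{\delta_k}]$ applied to the vertical metric. Your two checks, that $[\delta_j,\partial_{y^a}]=\Gamma^b_{ja}\partial_{y^b}$ is vertical and that $(\mathcal L_{\delta_j}\mathbf g)(\partial_{y^a},\partial_{y^b})=Q_{jab}$, supply precisely what the paper compresses into ``Lie derivatives represent brackets.''

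The holonomy-loop route is a valid but heavier alternative. It rests on the isometry statement, which is Lemma 4.2 in the paper's numbering, not Lemma 4.3. It also needs only the one-sided expansion $\phi_s=\mathrm{id}+sX+O(s^{3/2})$ in $C^1$ on compacts, rather than $C^1$-dependence on $s$.
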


\begin{proof}
View $g$ as a metric on the vertical tangent bundle. Equation (21) says that $\mathcal L_{\delta_k}g=0$ on vertical vectors. Since Lie derivatives represent brackets, \[
\mathcal L_{[\delta_j,\delta_k]}g=[\mathcal L_{\delta_j},\mathcal L_{\delta_k}]g=0.
\] Thus the restriction of $[\delta_j,\delta_k]$ to a fibre is Killing. Moreover, $[\delta_j,\delta_k]E=0$ because $\delta_jE=0$, so the field is tangent to the indicatrix.
\end{proof}

\begin{discussion}
The lemma is local on the base and uses only conservativity of the canonical connection and the Landsberg metricity identity (21). Closedness and the dimension of $M$ play no role. The dimension-three hypothesis enters later, when the space of Killing fields on the two-dimensional indicatrix is combined with the rank-one algebra of Section 5.
\end{discussion}

\section{Rank-one curvature fields}

\begin{definition}
For a vector space $W$, a \emph{vector-valued two-form depending on $y$} is a smooth map $R:W\setminus\{0\}\to W\otimes\Lambda^2W^*$. It is one-homogeneous if $R(sy)=sR(y)$ for every $s>0$. For fixed $u,v\in W$, evaluation in the two covariant slots gives the vector field $y\mapsto R(y)(u,v)$ on the punctured vector space. The span of the curvature fields means the real linear span of all these vector fields, not their pointwise span at a single $y$.
\end{definition}

\begin{definition}
A spray, and hence a Finsler metric, is called \emph{$R$-quadratic} if its Jacobi endomorphism is quadratic in $y$. Equivalently for the convention (25), the nonlinear curvature coefficients $R^i{}_{jk}(x,y)$ are linear in $y$, or the horizontal Berwald curvature $-\partial_{y^\ell}R^i{}_{jk}$ is independent of $y$; see Shen \cite{ref9}.
\end{definition}

\begin{lemma}
Let $W$ be a three-dimensional real vector space. Suppose that a smooth one-homogeneous vector-valued two-form $R(y)$ on $W\setminus\{0\}$ satisfies \[
\partial_{y^\ell}R^i{}_{jk}+\partial_{y^j}R^i{}_{k\ell}+\partial_{y^k}R^i{}_{\ell j}=0. \tag{26}
\] If the vector fields $R(u,v)$ span a real vector space of dimension at most one, then every coefficient $R^i{}_{jk}(y)$ is linear in $y$.
\end{lemma}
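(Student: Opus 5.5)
The plan is to exploit the rank-one hypothesis to factor $R$ as a fixed two-form times a single vector field, to use Lemma 5.1's Bianchi identity (26) together with decomposability of two-forms in dimension three to show that this vector field is invariant along a line, and then to use one-homogeneity plus smoothness to force linearity.

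\textbf{Step 1: factorization.} The map $u\wedge v\mapsto R(u,v)$ is a linear map from $\Lambda^2W$ into the space of smooth vector fields on $W\setminus\{0\}$, and by hypothesis its image has dimension at most one. If the image is zero there is nothing to prove. Otherwise I pick $u_0,v_0$ with $V:=R(u_0,v_0)\neq0$. Then there is a unique linear functional $\omega$ on $\Lambda^2W$, that is, a constant nonzero two-form $\omega\in\Lambda^2W^*$, such that
\[
R^i{}_{jk}(y)=V^i(y)\,\omega_{jk},
\]
where $V$ is smooth and one-homogeneous, as $R$ is. Substituting into (26) gives, for each fixed $i$,
\[
\partial_\ell V^i\,\omega_{jk}+\partial_jV^i\,\omega_{k\ell}+\partial_kV^i\,\omega_{\ell j}=0,
\]
which says exactly that $dV^i\wedge\omega=0$ as a three-form at every $y$.

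\textbf{Step 2: invariance along a line.} Since $\dim W=3$, the nonzero two-form $\omega$ is decomposable, $\omega=\alpha\wedge\beta$ with $\alpha,\beta$ independent. Then $dV^i\wedge\alpha\wedge\beta=0$ forces $dV^i\in\operatorname{span}(\alpha,\beta)$ pointwise. Let $e$ span the line $L=\ker\alpha\cap\ker\beta$. Then $e(V^i)=0$ everywhere on $W\setminus\{0\}$, so each $V^i$ is constant on every connected segment of a line parallel to $L$ that avoids the origin. Choose a complement plane $P$, so that $W=P\oplus L$. For $z\in P\setminus\{0\}$, the whole line $z+\mathbb Re$ avoids the origin, so
\[
v^i(z):=V^i(z+se)
\]
is well defined and independent of $s$. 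Moreover $v^i$ is smooth on $P\setminus\{0\}$ and positively one-homogeneous there.

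\textbf{Step 3: linearity.} This is the step I expect to be the main point; it is where smoothness of $F$ on all of $T^\circ M$, mentioned in the discussion of hypotheses, is used. The formula $v^i(z)=V^i(z+e)$ holds for all $z\neq0$. Since $z\mapsto V^i(z+e)$ is smooth near $z=0$ (the point $e$ is not the origin), $v^i$ extends to a smooth function on all of $P$. Homogeneity of the extension gives
\[
v^i(0)=\lim_{t\downarrow0}v^i(t z)=\lim_{t\downarrow0}t\,v^i(z)=0.
\]
Differentiability at $0$ then gives
\[
v^i(z)=\lim_{t\downarrow0}\frac{v^i(tz)}{t}=dv^i_0(z),
\]
so $v^i$ is linear on $P$. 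Consequently $V^i(z+se)=dv^i_0(z)$ for $z\neq0$. This expression is linear in $y=z+se$, and by continuity it holds on all of $W\setminus\{0\}$, including the points of $L$. Hence $R^i{}_{jk}(y)=V^i(y)\,\omega_{jk}$ is linear in $y$.
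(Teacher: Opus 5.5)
Your proof is correct and follows the paper's argument essentially step for step. Both factor $R^i{}_{jk}=\omega_{jk}V^i$, read (26) as $dV^i\wedge\omega=0$, and use decomposability of $\omega$ in dimension three to get invariance along $\ker\omega$. Both then extend across the origin of the quotient plane via $V^i(z+e)$ and conclude linearity from homogeneity and differentiability at $0$; your explicit continuity argument for the points of the kernel line $L$ fills in a small detail the paper leaves implicit.
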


\begin{proof}
The zero-dimensional case is immediate. Otherwise, choose a nonzero spanning field $K$. There is a fixed nonzero two-form $\omega\in\Lambda^2W^*$ such that \[
R^i{}_{jk}(y)=\omega_{jk}K^i(y). \tag{27}
\] Substituting (27) into (26) gives $\omega\wedge dK^i=0$. In dimension three, write $\omega=a\wedge b$ and choose coordinates such that $a=dy^1$ and $b=dy^2$. Then \[
\partial_{y^3}K^i=0\quad\text{on }W\setminus\{0\}. \tag{28}
\] For $(y^1,y^2)\ne(0,0)$, $K^i$ is constant along the affine line in the $y^3$-direction. It therefore defines a smooth function $k^i$ on the punctured $(y^1,y^2)$-plane. The function \[
\widetilde k^i(u,v)=K^i(u,v,1)
\] extends $k^i$ smoothly across the origin. Positive homogeneity gives $\widetilde k^i(sz)=s\widetilde k^i(z)$ for $s>0$. Hence $\widetilde k^i(0)=0$, and differentiability at the origin yields \[
\widetilde k^i(z)=\lim_{s\downarrow0}\frac{\widetilde k^i(sz)}s=D\widetilde k^i(0)z.
\] Thus $K$, and hence $R$, is linear.
\end{proof}

\begin{discussion}
The dimension-three assumption is used twice. Every nonzero two-form on a three-dimensional vector space is decomposable, and its kernel is one-dimensional. After quotienting by that kernel, the remaining function lives on a two-dimensional plane; smoothness of $K(u,v,1)$ then fills in the missing origin. In dimensions four and higher a two-form can have rank greater than two, and the argument does not imply that $K$ factors through a two-dimensional quotient. The smoothness on all of $W\setminus\{0\}$ is also essential at the points $(0,0,1)$ used in the extension. Positive homogeneity is what converts differentiability at the origin into linearity.
\end{discussion}

\begin{lemma}
The nonlinear curvature of a spray satisfies (26). Its fibre derivative is, up to the convention in (25), the horizontal curvature of the Berwald connection.
\end{lemma}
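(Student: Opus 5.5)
The plan is to derive (26) from the Jacobi identity for the horizontal vector fields $\delta_j$ of (20), combined with the bracket formulas between horizontal and vertical fields. First compute the mixed brackets. Since $\delta_j=\partial_{x^j}-N^a_j\partial_{y^a}$, we have
\[
[\partial_{y^\ell},\delta_j]=-\frac{\partial N^a_j}{\partial y^\ell}\,\partial_{y^a}=-\Gamma^a_{\ell j}\,\partial_{y^a}.
\]
The symmetry $\Gamma^a_{\ell j}=\partial_{y^\ell}\partial_{y^j}G^a=\Gamma^a_{j\ell}$ holds because $N^a_j=\partial_{y^j}G^a$. The curvature is already defined by (25) as $[\delta_j,\delta_k]=R^a{}_{jk}\partial_{y^a}$.

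Rather than a Jacobi identity among three horizontal fields, which yields the horizontal Bianchi identity, I will use the Jacobi identity for the triple $\partial_{y^\ell},\delta_j,\delta_k$. Computing $[\partial_{y^\ell},[\delta_j,\delta_k]]$ directly gives $\partial_{y^\ell}R^a{}_{jk}\,\partial_{y^a}$. By Jacobi this equals $[[\partial_{y^\ell},\delta_j],\delta_k]+[\delta_j,[\partial_{y^\ell},\delta_k]]$, which can be expanded using the formula above. The result expresses $\partial_{y^\ell}R^a{}_{jk}$ as $\delta_k\Gamma^a_{\ell j}-\delta_j\Gamma^a_{\ell k}$ plus terms quadratic in $\Gamma$. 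Those quadratic terms are
\[
\Gamma^b_{\ell j}\Gamma^a_{bk}-\Gamma^b_{\ell k}\Gamma^a_{bj},
\]
up to sign. Equivalently, and more simply, one can differentiate (25) directly:
\[
\partial_{y^\ell}R^a{}_{jk}=\delta_k\Gamma^a_{\ell j}-\delta_j\Gamma^a_{\ell k}+\Gamma^b_{\ell j}N^{a}_{k\,\cdot b}-\ldots,
\]
using $\partial_{y^\ell}\delta_k=\delta_k\partial_{y^\ell}+[\partial_{y^\ell},\delta_k]$. This identifies the fibre derivative, up to the sign fixed in (25), with the standard horizontal curvature $\delta_k\Gamma^a_{j\ell}-\delta_j\Gamma^a_{k\ell}+\Gamma^b_{j\ell}\Gamma^a_{bk}-\Gamma^b_{k\ell}\Gamma^a_{bj}$ of the Berwald connection. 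That settles the second assertion.

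For (26), take the cyclic sum over $(\ell,j,k)$ of this expression. Each term $\delta_k\Gamma^a_{\ell j}$ appears once with a plus sign and once with a minus sign after cyclic relabelling, because $\Gamma^a_{\ell j}$ is symmetric in its lower indices. The quadratic terms $\Gamma^b_{\ell j}\Gamma^a_{bk}$ cancel against $-\Gamma^b_{j\ell}\Gamma^a_{bk}$, which arises from the cyclic shift of $-\Gamma^b_{\ell k}\Gamma^a_{bj}$. Hence the cyclic sum vanishes. A cleaner alternative I would mention is that (26) is exactly the statement that the vector-valued two-form $R^a=\tfrac12R^a{}_{jk}\,dx^j\wedge dx^k$ satisfies $d_V R^a=0$ at the level of cyclic sums. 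This follows from $R^a{}_{jk}=\delta_kN^a_j-\delta_jN^a_k$ together with $N^a_j=\partial_{y^j}G^a$ being a vertical gradient. Concretely, the terms $\partial_{x^k}\partial_{y^\ell}\partial_{y^j}G^a$ are symmetric in $\ell,j$ and therefore drop out under alternation.

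The only real obstacle is bookkeeping: keeping the sign convention of (25) consistent and making sure every quadratic term pairs off correctly. I would verify the cancellation by writing out $\partial_{y^\ell}R^a{}_{jk}$ explicitly as
\[
\partial_{x^k}\Gamma^a_{\ell j}-\partial_{x^j}\Gamma^a_{\ell k}-N^b_k\partial_{y^b}\Gamma^a_{\ell j}+N^b_j\partial_{y^b}\Gamma^a_{\ell k}-\Gamma^b_{\ell k}\Gamma^a_{bj}+\Gamma^b_{\ell j}\Gamma^a_{bk}.
\]
Every term here is antisymmetric in $(j,k)$ and symmetric in the pair $(\ell,\text{other})$, since $\partial_{y^b}\Gamma^a_{\ell j}$ is totally symmetric in $b,\ell,j$. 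Any such expression has vanishing cyclic sum over $(\ell,j,k)$.
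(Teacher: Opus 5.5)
Your proposal is correct and is essentially the paper's proof: differentiating (25) in $y^\ell$ gives exactly (29), and the cyclic sum over $(\ell,j,k)$ vanishes because the expression has the form $S_{\ell j,k}-S_{\ell k,j}$ with $S$ symmetric in its first two indices, which is precisely the symmetry $\Gamma^a_{\ell j}=\Gamma^a_{j\ell}$ the paper invokes (your Jacobi-identity derivation is an equivalent repackaging of the same computation). One small sign remark: with $\mathcal R(X,Y)=[\nabla_X,\nabla_Y]-\nabla_{[X,Y]}$ and the vanishing vertical part of the Berwald connection, $\mathcal R(\delta_j,\delta_k)$ is the \emph{negative} of the expression you call the standard horizontal curvature, as the paper states and as Section 6 uses through $H^a{}_{bij}=-\partial_{y^b}R^a{}_{ij}$; the statement explicitly allows for this choice of convention, so this is not a gap.
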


\begin{proof}
Since $N^i_j=\partial_{y^j}G^i$ and $\Gamma^i_{j\ell}=\partial_{y^\ell}N^i_j=\Gamma^i_{\ell j}$, differentiation of (25) gives \[
\partial_{y^\ell}R^i{}_{jk}=\delta_k\Gamma^i_{j\ell}-\delta_j\Gamma^i_{k\ell}
-\Gamma^a_{k\ell}\Gamma^i_{ja}+\Gamma^a_{j\ell}\Gamma^i_{ka}. \tag{29}
\] The cyclic sum in $(\ell,j,k)$ vanishes by the symmetry of $\Gamma$. The displayed expression is the negative of the horizontal Berwald curvature matrix.
\end{proof}

\begin{discussion}
This is a differential identity for an arbitrary spray. It uses neither the Landsberg condition nor compactness. The symmetry $\Gamma^i_{j\ell}=\Gamma^i_{\ell j}$ follows from the fact that the nonlinear connection is induced by spray coefficients: mixed fibre derivatives of $G^i$ commute. Formula (26) is therefore the vertical Bianchi identity for the nonlinear curvature.
\end{discussion}

\begin{proposition}
Let $F$ be a Landsberg metric on a connected three-dimensional manifold. If $\dim\operatorname{Kill}(\Sigma_x,h_x)\le1$ at one point, then the nonlinear curvature is linear in the fibre variable at every point. In particular, $F$ is $R$-quadratic.
\end{proposition}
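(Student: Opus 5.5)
The argument assembles the three fibrewise results already established. First, Lemma 4.3 shows that the dimension of $\operatorname{Kill}(\Sigma_x,h_x)$ is constant on the connected base. Hence the hypothesis at one point gives $\dim\operatorname{Kill}(\Sigma_x,h_x)\le1$ at every $x\in M$. Now fix $x$ and put $W=T_xM$.

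By Lemma 4.4, each curvature field $R_x(u,v)$ is tangent to $\Sigma_x$ and its restriction is a Killing field of $(\Sigma_x,h_x)$. The step needing care is turning this into the hypothesis of Lemma 5.1, which concerns the real span of the fields on all of $W\setminus\{0\}$, not on $\Sigma_x$ alone. Restriction to $\Sigma_x$ is injective on one-homogeneous vector fields, because $W\setminus\{0\}=(0,\infty)\cdot\Sigma_x$ and $R_x(u,v)(sy)=sR_x(u,v)(y)$. Restriction is also linear. Therefore
\[
\operatorname{span}_{\mathbb R}\{R_x(u,v):u,v\in W\}
\]
embeds linearly into $\operatorname{Kill}(\Sigma_x,h_x)$, and so has dimension at most one. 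Here $\Sigma_x$ is compact, so its Killing fields are complete and global, and the restrictions really are elements of $\operatorname{Kill}(\Sigma_x,h_x)$.

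Next, fix the base point and regard $y\mapsto R^i{}_{jk}(x,y)$ as a smooth one-homogeneous vector-valued two-form on $W\setminus\{0\}$. Smoothness holds because $F$ is regular, so $G^i$ and $N^i_j$ are smooth on $T^\circ M$. One-homogeneity holds because $N^i_j$ is zero-homogeneous, $\delta_j$ preserves homogeneity degree, and so the coefficients $R^a{}_{jk}$ in (25) are one-homogeneous. Lemma 5.2 supplies the vertical Bianchi identity (26) for these coefficients. Note that (26) involves only fibre derivatives at fixed $x$, so it is a pointwise-in-$x$ statement, exactly the form Lemma 5.1 requires.

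Lemma 5.1 now applies with $\dim W=3$ and shows that every $R^i{}_{jk}(x,\cdot)$ is linear in $y$. Since $x$ was arbitrary, the nonlinear curvature is linear in the fibre variable everywhere. By Definition 5.2 this is precisely the statement that $F$ is $R$-quadratic.

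The only delicate point is the span argument in the second paragraph. Once injectivity and linearity of the restriction map are checked, the rest is a direct citation of Lemmas 4.3, 4.4, 5.1 and 5.2.
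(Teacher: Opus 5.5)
Your proposal is correct and follows the paper's proof step for step. The chain is the same: $\dim\operatorname{Kill}(\Sigma_x,h_x)$ is constant on the connected base, curvature fields are Killing fields on $\Sigma_x$, one-homogeneity transfers the span bound to $T_x^\circ M$, and Lemmas 5.1 and 5.2 give linearity. Your injectivity-of-restriction remark simply spells out the paper's phrase ``homogeneity gives the same statement.''

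There are two small slips, neither affecting the argument:
\begin{itemize}
\item The lemmas you cite as 4.3 and 4.4 are Lemmas 4.2 and 4.3 in the paper.
\item $N^i_j=\partial_{y^j}G^i$ is one-homogeneous in $y$, not zero-homogeneous. That fact, combined with the degree-preserving operators $\delta_j$, is what actually makes $R^a{}_{jk}$ one-homogeneous.
\end{itemize}
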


\begin{proof}
Lemma 4.2 makes the dimension bound valid everywhere. By Lemma 4.3, the real span of the curvature fields on each indicatrix has dimension at most one. Homogeneity gives the same statement on the punctured tangent space. Lemmas 5.1 and 5.2 therefore imply \[
R^i{}_{jk}(x,y)=A^i{}_{jk\ell}(x)y^\ell. \tag{30}
\] Equation (29) shows that the horizontal Berwald curvature is independent of $y$. Equivalently, the Jacobi curvature is quadratic in the fibre variable.
\end{proof}

\begin{discussion}
Connectedness is used only to transport the dimension of the Killing algebra from one base point to every other point by Lemma 4.2. The Landsberg condition enters through Lemmas 4.2 and 4.3. Dimension three enters through Lemma 5.1. The proposition is local with respect to compactness: the base may be noncompact. Its conclusion is $R$-quadraticity, not yet the Berwald property; compactness is introduced only in the next section to obtain that final step.
\end{discussion}

\section{Proof of the main theorem}

We first record the compact-base argument in the form needed below. This is the compact specialization of Crampin's theorem \cite{ref4}. The bounded-affine mechanism also appears in Shen's proof \cite{ref9}.

\begin{definition}
Along a Finsler geodesic $\gamma$, the \emph{dynamical derivative} $D=y^i\nabla^B_{\delta_i}$ is covariant differentiation in the spray direction using the Berwald connection. A vector field $v(t)$ along $\gamma$ is Berwald-parallel if $Dv/dt=0$. The unit tangent bundle is $SM=\{(x,y)\in T^\circ M:F(x,y)=1\}$.
\end{definition}

\begin{theorem}
A regular $R$-quadratic Landsberg metric on a compact manifold of dimension at least three is Berwald.
\end{theorem}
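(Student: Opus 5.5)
The plan is to reproduce Crampin's bounded-affine mechanism. The quantity to control is the Berwald curvature $B^i{}_{jkl}=\partial^3G^i/\partial y^j\partial y^k\partial y^l$; the goal is to show it is affine in the geodesic parameter along every geodesic. Compactness will then force the linear coefficient to vanish, and a second use of the same identity will show that $B$ itself vanishes.

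\textbf{Step 1: the Berwald curvature is Berwald-parallel.} The first identity needed is the standard commutation (Bianchi) formula expressing the horizontal Berwald curvature in terms of $B$. I will take the fibre derivative of (29) and use $\Gamma^i_{jl}=\partial_{y^l}N^i_j$, which gives
\[
\partial_{y^m}\partial_{y^\ell}R^i{}_{jk}=B^i{}_{j\ell m|k}-B^i{}_{k\ell m|j}.
\]
By (30) the left side is zero, so $B^i{}_{j\ell m|k}=B^i{}_{k\ell m|j}$. Contracting with $y^k$ and using $B^i{}_{k\ell m}y^k=0$ (homogeneity of degree $-1$ of $B$ in each lower index, i.e. $G^i$ is two-homogeneous) gives
\[
B^i{}_{j\ell m|k}y^k=(B^i{}_{k\ell m}y^k)_{|j}-B^i{}_{k\ell m}y^k{}_{|j}=0,
\]
since $y^k{}_{|j}=0$ for the Berwald connection. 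Hence $DB=0$ along every geodesic: $B$ is Berwald-parallel.

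\textbf{Step 2: the Landsberg tensor evolves linearly.} The Landsberg identity is $L_{jkl}=-\tfrac12 y_iB^i{}_{jkl}$, where $y_i=g_{ij}y^j$. Differentiating $C$ twice along the spray with the Berwald connection gives $\ddot C=\dot L=-\tfrac12 y_i\,(DB)^i{}_{jkl}=0$, using $Dy_i=0$. The Landsberg hypothesis gives more directly $\dot C=L=0$. However, $L=0$ forces only the $y$-contraction of $B$ to vanish, so a different quantity is needed. I will instead use the mean Berwald curvature $E_{jk}=\tfrac12 B^m{}_{jkm}$ and, more generally, the full $B$ paired with $g$: set
\[
\Phi(t)=g_{ip}B^i{}_{jkl}V^pX^jY^kZ^l,
\]
with $V,X,Y,Z$ Berwald-parallel along the geodesic. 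Since $Dg_{ij}=-2L_{ijk}... =0$ by the Landsberg condition and $DB=0$, $\Phi$ is constant along the geodesic. That alone does not give vanishing, so the affine function must come from the deviation of geodesics: the Jacobi-type quantity $\partial_{y}$ of the spray. Following Crampin, I will consider a fibre-direction vector $W$ and the derivative of $C$ in direction $W$ transported along the flow. Because the horizontal and vertical flows fail to commute by $B$, the quantity $C(\dots)$ evaluated along the vertical variation of the geodesic is $C_0+t\,g(B(\dots))$. That is, $t\mapsto$ (Cartan tensor of the varied lift) is affine in $t$ with slope $\Phi$.

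\textbf{Step 3: compactness kills the slope.} $SM$ is compact, so $C$ is uniformly bounded on $SM$ and the geodesic flow is complete. A function that is affine in $t\in\mathbb R$ and uniformly bounded has zero slope. Hence $g_{ip}B^i{}_{jkl}=0$ for all parallel fields, and nondegeneracy of $g$ gives $B=0$, i.e. the metric is Berwald by (22).

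\textbf{Main obstacle.} The hard part is identifying precisely the affine quantity in Step 2. Concretely, this means computing the commutator of the dynamical derivative with the vertical derivative so that $\frac{d^2}{dt^2}$ of a suitable $C$-component vanishes, using $DB=0$ and $L=0$, while its $t$-coefficient is a nondegenerate pairing of $B$. I would first try the vertical derivative of $L$, namely $L_{jkl\cdot m}$, whose dynamical evolution involves $C_{jkl|m}$ and $B$; requiring that $L\equiv0$ hold identically then makes $C_{jkl|m}$ affine along geodesics with slope proportional to $g$-contracted $B$. Boundedness of $C_{jkl|m}$ on the compact $SM$ is what closes the argument.
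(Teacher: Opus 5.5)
Your Step 1 is exactly the paper's first move ((32)--(33)), and your Step 3 is its compactness argument. Step 2, which you yourself flag as the obstacle, is never carried out, and the one concrete candidate you propose is the wrong one. Under the Landsberg hypothesis $C_{ijk|\ell}$ is not affine with nonzero slope. It equals $B_{ijk\ell}=g_{ia}B^a{}_{jk\ell}$, so it is Berwald-parallel by Step 1 (using $Dg=0$), and bounding it on $SM$ yields nothing. For the same reason your opening goal ``show $B$ is affine'' is misplaced: $B$ is constant along the flow and plays the role of the slope. Your $\Phi$ is indeed that slope, but you never exhibit the function whose derivative it is.

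Two identities are missing.
First, differentiate the Landsberg identity $g_{ij|\ell}=0$ in $y^k$ and commute the vertical and horizontal derivatives. This gives $2C_{ijk|\ell}=g_{aj}B^a{}_{ik\ell}+g_{ia}B^a{}_{jk\ell}$. Differentiating $y_aB^a{}_{jk\ell}=0$ in $y^i$ gives $B_{ijk\ell}=-y_a\partial_{y^i}B^a{}_{jk\ell}$, which is totally symmetric. Hence $C_{ijk|\ell}=B_{ijk\ell}$.

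Second, for any covariant tensor $T$ one has $\partial_{y^\ell}(DT)-D(T_{,.\ell})=T_{|\ell}$, because the $B$-terms of the commutator are annihilated by $y^iB^a{}_{i\ell b}=0$. Apply this to $T=C$ with $DC=L\equiv0$. It gives $D(C_{ijk,.\ell})=-C_{ijk|\ell}=-B_{ijk\ell}$, and then $D^2(C_{ijk,.\ell})=0$.

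So the affine quantity is the \emph{vertical} derivative $C_{ijk,.\ell}$, contracted with Berwald-parallel fields (whose $g$-lengths are constant since $Dg=0$). Your heuristic about differentiating $C$ in a fibre direction and transporting along the flow points at this. Your idea of differentiating $L$ vertically is also the right starting point, but you drew the wrong conclusion from it.

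Once this is in place, Step 3 works as you wrote it. The needed boundedness is that of $C_{ijk,.\ell}$ (not of $C$ or $C_{ijk|\ell}$) on the compact normalized bundle. It forces $B_{ijk\ell}V^iX^jY^kZ^\ell=0$ for all initial vectors, and hence $B=0$.
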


\begin{proof}
Write a comma for fibre differentiation, a vertical bar for horizontal covariant differentiation with respect to the Berwald connection, and $D=y^i\nabla^B_{\delta_i}$ for the dynamical derivative. Homogeneity gives \[
y^i\Gamma^a_{ij}=N^a_j,\qquad y^iB^a{}_{ijk}=0,\qquad y^a{}_{|j}=0. \tag{31}
\] Let $H^a{}_{bij}$ denote the horizontal Berwald curvature. Since $H^a{}_{bij}=-\partial_{y^b}R^a{}_{ij}$, a direct mixed Bianchi calculation gives \[
H^a{}_{bij,.\ell}=B^a{}_{bj\ell|i}-B^a{}_{bi\ell|j}. \tag{32}
\] In the $R$-quadratic case, the left-hand side vanishes. Contracting with $y^i$ and using (31), we obtain \[
DB^a{}_{bj\ell}=0. \tag{33}
\] Put $C_{ijk}=\frac12g_{ij,.k}$ and $B_{ijkl}=g_{ia}B^a{}_{jkl}$. Twice differentiating the conservativity identity and then differentiating the Landsberg identity gives \[
Q_{kij}=y^aB_{akij},\qquad
B_{ijkl}=-y^a\partial_{y^i}B_{ajkl}. \tag{34a}
\] The second expression contains the fourth fibre derivatives of the spray coefficients and is therefore symmetric in all four lower indices. Differentiating $Q_{\ell ij}=0$ in the fibre variable gives \[
2C_{ijk|\ell}=g_{aj}B^a{}_{ik\ell}+g_{ia}B^a{}_{jk\ell}=2B_{ijk\ell}. \tag{34b}
\] Consequently, \[
C_{ijk|\ell}=B_{ijk\ell},\qquad DC=0,\qquad DB_{ijk\ell}=0. \tag{34}
\] For every covariant tensor $T$, direct coordinate expansion gives \[
\partial_{y^\ell}(DT)-D(T_{,.\ell})=T_{|\ell}. \tag{35}
\] Applying (35) to $C$ and using (34), we find \[
D(C_{ijk,.\ell})=-B_{ijk\ell},\qquad D^2(C_{ijk,.\ell})=0. \tag{36}
\] Let $\gamma(t)$ be a unit-speed geodesic, and let $v(t)$ be a Berwald-parallel vector field satisfying $g_{\dot\gamma}(v,v)=1$. Define \[
a(t)=C_{ijk,.\ell}(\gamma(t),\dot\gamma(t))v^iv^jv^kv^\ell.
\] Equation (36) shows that $a$ is affine and \[
a'(t)=-B_{ijk\ell}v^iv^jv^kv^\ell. \tag{37}
\] The unit tangent bundle $SM=\{F=1\}$ is compact, and the geodesic flow on it is complete. The bundle of triples $(x,y,v)$ satisfying $(x,y)\in SM$ and $g_y(v,v)=1$ is compact. Hence $a(t)$ is bounded. A bounded affine function on $[0,\infty)$ is constant, so the right-hand side of (37) vanishes for every $v$. Since $B_{ijk\ell}$ is totally symmetric, polarization gives $B_{ijk\ell}=0$. Thus $F$ is Berwald.
\end{proof}

\begin{discussion}
Compactness is used in exactly two places in the final paragraph. First, $SM$ is compact, so the geodesic spray is complete and every unit-speed geodesic exists for all positive time. Second, the normalized bundle of triples $(x,y,v)$ is compact, so the scalar function $a(t)$ is bounded along the complete orbit. Equations (31)--(36) are local identities and do not use compactness. Thus the theorem remains valid on a complete noncompact manifold whenever the relevant vertical derivative of the Cartan tensor is bounded along every lifted geodesic, which is the boundedness form appearing in Crampin \cite{ref4}.
\end{discussion}

\begin{discussion}
The bounded-affine argument itself is not peculiar to dimension three; this is why Theorem 6.1 is stated in dimension at least three. Regularity supplies all fourth fibre derivatives used in (34a)--(36), while strong convexity makes the normalization $g_y(v,v)=1$ compact in each fibre. The theorem is invoked only after the three-dimensional argument of Sections 3--5 has produced $R$-quadraticity.
\end{discussion}

\begin{proof}[Proof of Theorem 1.1]
Assume first that $M$ is connected, and put \[
d=\dim\operatorname{Kill}(\Sigma_x,h_x).
\] By Lemma 4.2, $d$ is independent of $x$. Suppose first that $d\ge2$. Lemma 4.1 implies that every indicatrix metric has constant Gaussian curvature. Proposition 3.2 shows that every tangent norm $F_x$ is Euclidean. Hence $g_{ij}(x,y)$ is independent of $y$ and defines a smooth Riemannian metric on $M$ whose squared norm is $F^2$. Therefore $F$ is Berwald. Suppose next that $d\le1$. By Proposition 5.3, the metric is $R$-quadratic. Theorem 6.1 applies because $M$ is compact, and again $F$ is Berwald. The argument applies separately to every connected component of $M$.
\end{proof}

\begin{corollary}
Let $F$ be a connected three-dimensional regular Landsberg manifold. If an indicatrix admits two independent Killing fields, then $F$ is Riemannian. Otherwise, the nonlinear curvature is linear in the fibre variable and $F$ is $R$-quadratic.
\end{corollary}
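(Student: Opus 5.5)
The plan is to read this corollary as a restatement of the fibrewise dichotomy that was already used in the proof of Theorem 1.1. The point is to check that the compactness step, Theorem 6.1, is never invoked. Let $M$ be the connected three-dimensional base carrying the regular Landsberg metric $F$.

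\emph{Invariance of the Killing dimension.} Set $d=\dim\operatorname{Kill}(\Sigma_x,h_x)$. By Lemma 4.2, nonlinear parallel translation along any piecewise smooth curve is an isometry between indicatrix metrics. Since $M$ is connected, $d$ is independent of $x$. The hypothesis ``an indicatrix admits two independent Killing fields'' therefore means $d\ge2$ at every point, and its negation means $d\le1$ at every point.

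\emph{Case $d\ge2$.} Each $\Sigma_x$ is connected, being diffeomorphic to $S^2$ as the boundary of a strictly convex body in a three-dimensional space. Lemma 4.1 then shows that every $h_x$ has constant Gaussian curvature. Proposition 3.2 gives that each tangent norm $F_x$ is Euclidean, so $g_{ij}(x,y)=a_{ij}(x)$ is independent of $y$.

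It remains to see that $a_{ij}$ is smooth in $x$. This follows by evaluating $g_{ij}$ along any smooth nonvanishing local vector field, for example $y=\partial_{x^1}$. Hence $F^2=a_{ij}(x)y^iy^j$ and $F$ is Riemannian. No compactness enters here.

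\emph{Case $d\le1$.} Proposition 5.3 applies directly: via Lemmas 4.3, 5.1 and 5.2 it gives $R^i{}_{jk}(x,y)=A^i{}_{jk\ell}(x)y^\ell$. This is exactly $R$-quadraticity in the sense of Definition 5.2. The discussion after Proposition 5.3 confirms that this step is local and allows a noncompact base.

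The only delicate point is bookkeeping rather than mathematics. One must make sure that connectedness of the indicatrices, needed in Lemma 4.1, and connectedness of the base, needed to propagate $d$, are both available. One must also confirm that nothing in either case uses closedness of $M$. Both of these hold by the observations above.
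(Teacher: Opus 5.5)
Your proposal is correct and follows the paper's own argument, namely the discussion after the corollary, which mirrors the proof of Theorem 1.1. Lemma 4.2 and connectedness of the base make $d=\dim\operatorname{Kill}(\Sigma_x,h_x)$ constant; the case $d\ge2$ goes through Lemma 4.1 and Proposition 3.2, and the case $d\le1$ through Proposition 5.3, with Theorem 6.1 never invoked. The paper leaves two points implicit that you handle correctly: connectedness of each $\Sigma_x\cong S^2$, which Lemma 4.1 needs, and smoothness of $a_{ij}(x)$, via evaluation along a nonvanishing local vector field.
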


\begin{discussion}
This dichotomy is entirely fibrewise and does not require the base to be compact. In the first case, two independent Killing fields force constant Gaussian curvature on every indicatrix and Proposition 3.2 makes each tangent norm Euclidean. In the second case, the Killing algebra has dimension at most one and Proposition 5.3 gives $R$-quadraticity. Only the further implication ``$R$-quadratic Landsberg implies Berwald'' uses compactness through Theorem 6.1.
\end{discussion}

\begin{remark}
The corollary does not require compactness of the base. Compactness enters only through the boundedness argument in Theorem 6.1. The curvature $R^g$ in Section 3 is the curvature of the Hessian metric inside one punctured tangent space, whereas $R$ in Sections 4--6 is the nonlinear curvature of the horizontal distribution; the two should not be identified.
\end{remark}


\begin{thebibliography}{99}
\bibitem{ref1} G. S. Asanov, \emph{Finsleroid--Finsler space and geodesic spray coefficients}, Publ. Math. Debrecen \textbf{71} (2007), 397--412.

\bibitem{ref2} D. Bao, \emph{On two curvature-driven problems in Riemann--Finsler geometry}, in Finsler Geometry, Adv. Stud. Pure Math. \textbf{48}, Math. Soc. Japan, 2007, 19--71.

\bibitem{ref3} L. Berwald, \emph{Untersuchung der Krümmung allgemeiner metrischer Räume auf Grund des in ihnen herrschenden Parallelismus}, Math. Z. \textbf{25} (1926), 40--73.

\bibitem{ref4} M. Crampin, \emph{On Landsberg spaces and the Landsberg--Berwald problem}, Houston J. Math. \textbf{37} (2011), 1103--1124.

\bibitem{ref5} M. Crampin, \emph{The current state of play in the Landsberg--Berwald problem of Finsler geometry}, Extracta Math. \textbf{39} (2024), 57--95.

\bibitem{ref6} G. Landsberg, \emph{Über die Krümmung in der Variationsrechnung}, Math. Ann. \textbf{65} (1908), 313--349.

\bibitem{ref7} V. S. Matveev, \emph{On ``All regular Landsberg metrics are always Berwald'' by Z. I. Szabó}, Balkan J. Geom. Appl. \textbf{14} (2009), 50--52.

\bibitem{ref8} B. Opozda, \emph{A sectional curvature for statistical structures}, Linear Algebra Appl. \textbf{497} (2016), 134--161.

\bibitem{ref9} Z. Shen, \emph{On $R$-quadratic Finsler spaces}, Publ. Math. Debrecen \textbf{58} (2001), 263--274.

\bibitem{ref10} Z. Shen, \emph{On a class of Landsberg metrics in Finsler geometry}, Canad. J. Math. \textbf{61} (2009), 1357--1374.

\bibitem{ref11} U. Simon, \emph{What do we know about intrinsic metric curvature of affine hypersurfaces?}, lecture slides, PADGE, Leuven, 2012.

\bibitem{ref12} Z. I. Szabó, \emph{All regular Landsberg metrics are Berwald}, Ann. Global Anal. Geom. \textbf{34} (2008), 381--386.

\bibitem{ref13} Z. I. Szabó, \emph{Correction to ``All regular Landsberg metrics are Berwald''}, Ann. Global Anal. Geom. \textbf{35} (2009), 227--230.

\bibitem{ref14} M. Xu and V. S. Matveev, \emph{Proof of Laugwitz conjecture and Landsberg unicorn conjecture for Minkowski norms with $SO(k)\times SO(n-k)$-symmetry}, Canad. J. Math. \textbf{74} (2022), 1486--1516.

\bibitem{ref15} D. Bao, S.-S. Chern, and Z. Shen, \emph{An Introduction to Riemann--Finsler Geometry}, Graduate Texts in Mathematics \textbf{200}, Springer, New York, 2000.

\bibitem{ref16} H. Shima, \emph{The Geometry of Hessian Structures}, World Scientific, Hackensack, NJ, 2007.

\bibitem{ref17} K. Nomizu and T. Sasaki, \emph{Affine Differential Geometry: Geometry of Affine Immersions}, Cambridge Tracts in Mathematics \textbf{111}, Cambridge University Press, Cambridge, 1994.

\bibitem{ref18} B. Opozda, \emph{Some inequalities and applications of Simons' type formulas in Riemannian, affine, and statistical geometry}, J. Geom. Anal. \textbf{32} (2022), Article 108.

\bibitem{ref19} S. Bácsó, F. Ilosvay, and B. Kis, \emph{Landsberg spaces with common geodesics}, Publ. Math. Debrecen \textbf{42} (1993), 139--144.

\bibitem{ref20} M. Crampin, \emph{A condition for a Landsberg space to be Berwaldian}, Publ. Math. Debrecen \textbf{93} (2018), 143--155.

\bibitem{ref21} M. Crampin, \emph{Invariant volumes, weakly-Berwald Finsler spaces, and the Landsberg--Berwald problem}, Publ. Math. Debrecen \textbf{100} (2022), 101--118.

\bibitem{ref22} Y. Ichijyō, \emph{Finsler manifolds modeled on a Minkowski space}, J. Math. Kyoto Univ. \textbf{16} (1976), 639--652.

\bibitem{ref23} M. Li, \emph{Equivalence theorems of Minkowski spaces and an application in Finsler geometry}, Acta Math. Sinica (Chin. Ser.) \textbf{62} (2019), 177--190; English version: arXiv:1504.04475v2.

\bibitem{ref24} M. Matsumoto and C. Shibata, \emph{On semi-$C$-reducibility, $T$-tensor $=0$, and $S_4$-likeness of Finsler spaces}, J. Math. Kyoto Univ. \textbf{19} (1979), 301--314.

\bibitem{ref25} X. Mo and L. Zhou, \emph{The curvatures of spherically symmetric Finsler metrics in $\mathbb{R}^n$}, arXiv:1202.4543.

\bibitem{ref26} S. Numata, \emph{On Landsberg spaces of scalar curvature}, J. Korean Math. Soc. \textbf{12} (1975), 97--100.

\bibitem{ref27} M. Xu and S. Deng, \emph{The Landsberg equation of a Finsler space}, Ann. Sc. Norm. Super. Pisa Cl. Sci. (5) \textbf{22} (2021), 31--51.

\bibitem{ref28} S. Zhou, J. Wang, and B. Li, \emph{On a class of almost regular Landsberg metrics}, Sci. China Math. \textbf{62} (2019), 935--960.

\bibitem{ref29} H. Feng, Y. Han, and M. Li, \emph{An equivalence theorem of a class of Minkowski norms and its applications}, Sci. China Math. \textbf{64} (2021), 1429--1446.

\bibitem{ref30} C. Shibata, \emph{On the curvature tensor $R^h{}_{ijk}$ of Finsler spaces of scalar curvature}, Tensor (N.S.) \textbf{32} (1978), 311--317.
\end{thebibliography}
\end{document}